  \newtheorem{theorem}{Theorem}[section]
 \newtheorem{claim}[theorem]{Claim}
 \newtheorem{lemma}[theorem]{Lemma}
 \newtheorem{example}[theorem]{Example}
 {\theoremstyle{definition}
 \newtheorem{definition}[theorem]{Definition}
 \newtheorem{remark}[theorem]{Remark}
 
 \newtheorem{question}[theorem]{Question}
 
 }
\renewcommand{\P}{\ensuremath{\mathbb P}} %probability
\newcommand{\beq}{\begin{equation}}
\newcommand{\eeq}{\end{equation}}
\title{Random constructions for translates of non-negative functions}
\author{Zolt\'an Buczolich\thanks{\scriptsize
Research supported by the Hungarian National Research, Development and Innovation Office--NKFIH, Grant  124003.
},
Department of Analysis, ELTE E\"otv\"os Lor\'and\\
University, P\'azm\'any P\'eter S\'et\'any 1/c, 1117 Budapest, Hungary\\
email: buczo@cs.elte.hu\\
{\tt www.cs.elte.hu/\hbox{$\sim$}buczo}\\
ORCID Id: 0000-0001-5481-8797
 \medskip\\
  Bruce Hanson, Department of Mathematics,\\ Statistics and Computer Science,\\ St.\ Olaf College,
Northfield, Minnesota 55057, USA\\
{email:} \texttt{hansonb@stolaf.edu}
\medskip\\
 Bal\'azs Maga\thanks{\scriptsize This author was supported by the \'UNKP-17-2 New National Excellence of the Hungarian Ministry of Human Capacities, and by the Hungarian National Research, Development and Innovation Office–NKFIH, Grant 124749.},
Department of Analysis, ELTE E\"otv\"os Lor\'and\\
University, P\'azm\'any P\'eter S\'et\'any 1/c, 1117 Budapest, Hungary\\
 email: magab@cs.elte.hu \\{\tt www.cs.elte.hu/\hbox{$\sim$}magab}
 \medskip\\
and
 \medskip\\
 G\'asp\'ar V\'ertesy\thanks{\scriptsize  This author was supported by the Hungarian National Research, Development and Innovation Office–NKFIH, Grant 124749.
 \newline\indent {\it Mathematics Subject
Classification:} Primary : 28A20 Secondary : 40A05, 60F15, 60F20.
\newline\indent {\it Keywords:} almost everywhere convergence, asymptotically dense, Borel--Cantelli lemma, laws of large numbers, zero-one laws.},
 Department of Analysis, ELTE E\"otv\"os Lor\'and\\
University, P\'azm\'any P\'eter S\'et\'any 1/c, 1117 Budapest, Hungary\\
email: vertesy.gaspar@gmail.com\
}
\date{\today}
\begin{document}
\maketitle

\medskip

\medskip

\medskip
%$\hat{\check\jmath}$

%\end{document}

\begin{abstract}
 Suppose $\Lambda$ is a discrete infinite set of nonnegative real numbers.
We say that $ {\Lambda}$ is  type $2$ if the series $s(x)=\sum_{\lambda\in\Lambda}f(x+\lambda)$
 does not satisfy a zero-one law. This means that we can find a non-negative measurable ``witness function"
 $f: {\ensuremath {\mathbb R}}\to [0,+ {\infty})$ such that both the convergence set $C(f, {\Lambda})=\{x: s(x)<+ {\infty} \}$
and its complement the divergence set $D(f, {\Lambda})=\{x: s(x)=+ {\infty} \}$ are of positive Lebesgue measure.
 If $ {\Lambda}$ is not  type $2$ we say that $ {\Lambda}$ is  type $1$.

 The main result of our paper answers a question raised by Z. Buczolich,
 J-P. Kahane, and D. Mauldin. By a random construction we show that one can always choose
 a witness function which is the characteristic function of a measurable set.

 We also consider the effect on the type of a set $ {\Lambda}$ if we randomly delete 
 its elements.

 Motivated by results concerning  weighted sums $\sum c_n f(nx)$ and the Khinchin conjecture, we also discuss some results about weighted sums\\
 $\sum_{n=1}^{\infty}c_n f(x+\lambda_n)$.

  \end{abstract}

%\begin{figure}
  %\begin{center}
%  \includegraphics[width=14.1cm,height=8.1cm]{hcv3}
    %\caption{$f$, $ {{{\varphi}}_ {1,f}}$ and $ {{{\varphi}}_ {2,f}}$ in 1D}
     %\label{fig1}
  %\end{center}
%\end{figure}

\section{Introduction}\label{*secintro}
The original research leading to this paper started with questions
concerning convergence properties of series of the type
$\sum_{n=1}^{{\infty}}f(nx)$ for nonnegative measurable functions $f$.

First there were results about the periodic case.
This is the case where $f: {\ensuremath {\mathbb R}}\to {\ensuremath {\mathbb R}}$ is a periodic measurable function and
without limiting generality we can assume that its period
$p=1$.

Results of Mazur and Orlicz in \cite{[MO]}
imply that if the periodic function
$f$ is the characteristic function of a set of
 positive (Lebesgue) measure,
then for almost every $x$ we have
$\sum_{n}f(nx)= {\infty}$.
Thus, in the periodic case we have a zero-one law:
the series either converges or diverges almost everywhere.

In this case it is more interesting to consider the
Ces{{$ \grave{\hbox{a}}$}}ro $1$ means of the partial sums
of our series.
A famous problem is the
{\it Khinchin conjecture \cite{[Kh]} (1923):}

{\it Assume that
$E {\subset} (0,1)$
is a measurable set and
$f(x)=\chi_{E}(\{x\})$,
where $\{x\}$ denotes the fractional part of $x$.
Is it true that for almost every $x$
$$\frac{1}{k}\sum_{n=1}^{k} f(nx)\to  {\mu}(E)?$$}
(In our paper $ {\mu}$ denotes the Lebesgue measure.)

Even at the time of the statement of the Khinchin conjecture it was a known result  of
H. Weyl \cite{[We]},
 that there is a positive answer to the above question if
$f$ is Riemann integrable.

However in 1969
Marstrand \cite{[M]}
showed that the Khinchin conjecture is not true.
Other counterexamples were given by J. Bourgain \cite{[Bou]} by using his entropy method and by A. Quas and M. Wierdl \cite{[QW]}.
For further results related to the Khinchin conjecture  we also refer to
\cite{[Beck]} and \cite{[Bee]}.

In the non-periodic measurable case there was a question of
Heinrich von Weizs\"aker \cite{[HW]} concerning a zero-one law:

{\it Suppose $f:(0,+ {\infty})\to {\ensuremath {\mathbb R}}$ is a measurable function.
Is it true that
$\sum_{n=1}^{{\infty}}f(nx)$
either converges (Lebesgue) almost everywhere or diverges almost everywhere, i.e.
is there  a zero-one law for $\sum f(nx)$?}

J. A. Haight in \cite{[H1]} also considered a similar question
and his results implied that there exists a measurable set
$H {\subset} (0, {\infty})$ such that
 if
$f(x)=\chi_{H}(x)$,
the characteristic function of
 $H$ then
$\int_{0}^{{\infty}}f(x)dx= {\infty}$
and $\sum_{n=1}^{{\infty}}f(nx)< {\infty}$
everywhere.

In  \cite{[BM1]}
Z. Buczolich and D. Mauldin answered the Haight--Weizs\"aker problem.

\begin{theorem}\label{*HWth} There exists a
measurable function $f:(0,+ {\infty})\to \{0,1\}$ and
two nonempty intervals $I_{F}, \ I_{{\infty}} {\subset} [{1\over 2},1)$
such that for every $x\in I_{{\infty}}$ we have $\sum_{n=1}^{{\infty}}f(nx)=+ {\infty}$
and for almost every $x\in I_{F}$ we have $\sum_{n=1}^{{\infty}}f(nx)<+ {\infty}.$
The function $f$ is the characteristic function of an open set
$E$.\end{theorem}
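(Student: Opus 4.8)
The plan is to realize $f=\chi_E$ with $E$ an open subset of $(0,+\infty)$ built as a disjoint union $E=\bigcup_{k=1}^{\infty}E_k$, where each $E_k$ is a finite union of open intervals contained in a block $R_k=[\alpha_k,2\alpha_k]$ with $\alpha_k\nearrow\infty$ very fast, so that the blocks are far apart. Fix two disjoint closed subintervals $I_\infty$ and $I_F$ of $[1/2,1)$. For $x\in[1/2,1)$ the only multiples $nx$ that can meet $R_k$ are those with $n\in(\alpha_k,4\alpha_k)$, so the contribution of block $k$ to $\sum_n\chi_E(nx)$ is the finite count $b_k(x)=\#\{n:nx\in E_k\}$, and $\sum_n\chi_E(nx)=\sum_k b_k(x)$. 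Thus it suffices to arrange (i) a \emph{covering} property forcing $b_k(x)\ge 1$ for every $x\in I_\infty$ and every $k$, which yields $\sum_n\chi_E(nx)=+\infty$ on all of $I_\infty$; and (ii) a \emph{smallness} property making $\sum_k\int_{I_F}b_k\,d\mu<\infty$, which by Tonelli forces $\sum_k b_k(x)<\infty$ for a.e.\ $x\in I_F$ (a first-moment Borel--Cantelli argument).

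For the covering property I would use the elementary dilation mechanism: if $J=[\gamma,\delta]\subset R_k$ then $J/n=[\gamma/n,\delta/n]$, and consecutive dilates $J/n$ and $J/(n+1)$ overlap once $n\gtrsim\gamma/(\delta-\gamma)$, so a band of indices $n$ makes $\bigcup_n J/n$ an interval sweeping across $[1/2,1)$. Choosing the lengths and positions of the component intervals of $E_k$ appropriately, one guarantees $\bigcup_n E_k/n\supset I_\infty$, i.e.\ every $x\in I_\infty$ has a multiple landing in $E_k$; doing this for all $k$ gives infinitely many hits. The smallness property is then a direct computation, since $\int_{I_F}b_k\,d\mu=\sum_n\mu\bigl(E_k\cap nI_F\bigr)/n$, reducing everything to controlling how much of the dilated family $\{E_k/n\}_n$ lands inside $I_F$.

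The main obstacle — and the real content of the theorem — is that these two demands pull against each other: a single far interval of length $\ell$ is met by $\approx\ell/x$ multiples of \emph{every} $x$, so a naive block with $\mu(E_k)\approx 1$ forces $\int_{I_F}b_k$ and $\int_{I_\infty}b_k$ to be of the same constant order, whence $\sum_k\int_{I_F}b_k$ diverges and one obtains divergence on $I_F$ as well. Breaking this symmetry is the heart of the construction: one must give each $E_k$ an internal gap structure placed in arithmetic relation to $I_\infty$ rather than $I_F$, so that $\bigcup_n E_k/n$ \emph{concentrates} on $I_\infty$ (still covering it) while meeting $I_F$ in measure that tends to $0$ fast enough to be summable. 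I would carry this out by an inductive, self-similar scheme in which the coverage of $I_\infty$ achieved at stage $k$ is reproduced at stage $k+1$ while the leakage $\mu(A_k)$, with $A_k=\bigl(\bigcup_n E_k/n\bigr)\cap I_F$, is forced to decrease geometrically in $k$. Proving that coverage of $I_\infty$ is maintained under this thinning while the leakage into $I_F$ remains summable is the crux; once it is in place, divergence on $I_\infty$ is immediate and a.e.\ convergence on $I_F$ follows from the Borel--Cantelli step.
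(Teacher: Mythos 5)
First, a point of orientation: the present paper does not prove Theorem \ref{*HWth} at all; it quotes it as background from the earlier Buczolich--Mauldin paper \cite{[BM1]}, so your attempt must be judged on its own merits rather than against any argument in this text. Your framework is the natural one and is set up correctly: far-separated blocks $E_k\subset[\alpha_k,2\alpha_k]$, so that for $x\in[\tfrac12,1)$ one has $\sum_n\chi_E(nx)=\sum_k b_k(x)$ with $b_k$ the per-block hit count, divergence on $I_\infty$ from a per-block covering property, and a.e.\ convergence on $I_F$ from a Borel--Cantelli estimate. But note that your property (ii), summability of the first moments $\int_{I_F}b_k\,d\mu$, is not merely in tension with (i) --- it is provably incompatible with it. For any measurable $E_k\subset[\alpha_k,2\alpha_k]$ and any interval $I=[a,b]\subset[\tfrac12,1)$,
\begin{equation*}
\int_I b_k\,d\mu=\sum_n\frac{\mu(E_k\cap nI)}{n}=\int_{E_k}\Bigl(\ \sum_{n:\,y/n\in I}\frac1n\Bigr)dy=\Bigl(\log\tfrac ba+O(\alpha_k^{-1})\Bigr)\mu(E_k),
\end{equation*}
so the first moment over any subinterval of $[\tfrac12,1)$ depends only on $\mu(E_k)$ and never on the internal structure of $E_k$; while covering $I_\infty$ forces $|I_\infty|\le\sum_{\alpha_k<n\le4\alpha_k}\mu(E_k)/n=\mu(E_k)(\log4+o(1))$, hence $\mu(E_k)$ is bounded below. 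Consequently $\int_{I_F}b_k\,d\mu\ge c>0$ for every admissible block and $\sum_k\int_{I_F}b_k\,d\mu=\infty$ always. So the Tonelli/first-moment route is closed, and the only possible mechanism is the one you retreat to at the end: the hit sets $A_k=\{x\in I_F:\ b_k(x)\ge1\}$ must have summable measure while each point of $A_k$ receives, on average, at least on the order of $1/\mu(A_k)$ hits.

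This is where the genuine gap lies: producing that concentration is the entire content of the theorem, and your proposal never constructs it --- ``internal gap structure placed in arithmetic relation to $I_\infty$,'' ``inductive, self-similar scheme,'' and leakage ``forced to decrease geometrically'' are restatements of the goal, not a mechanism, as you yourself concede by calling this step the crux. It is also not a routine step that can be left to the reader: single-scale candidates fail on both ends. Intervals spread over all integers of the block give every $x$ of typical Diophantine behaviour roughly $\mu(E_k)$ hits by equidistribution, so the hit set again occupies a fixed positive fraction of $[\tfrac12,1)$; concentrating instead on a sparse arithmetic subgrid does localize the hits near rationals $p/q$ of small denominator, but the covering radius around each such $p/q$ is only $O(\ell_k/\alpha_k)$, too small compared with the Farey gaps at the accessible denominator level, so $I_\infty$ (which, being an interval, contains rationals of bounded denominator and the large gaps adjacent to them) is not covered; quantitatively, either route again yields hit sets of measure bounded below. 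Whatever escapes this scissors must exploit finer arithmetic structure tying the blocks to one another rather than treating them independently --- this is exactly the technical achievement of \cite{[BM1]}, the reason the Haight--von Weizs\"acker problem stood open for three decades --- and nothing in your proposal supplies a candidate for it. (A smaller remark: you also fix the disjoint intervals $I_\infty,I_F$ arbitrarily in advance, which is more than the theorem asks; the stated result only requires that \emph{some} pair of intervals exists, and the known construction produces them rather than receiving them.) As it stands, you have correctly reformulated the theorem and correctly diagnosed why naive constructions fail, but the proof itself is missing.
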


Later with Jean-Pierre Kahane in papers \cite{[BKM1]} and \cite{[BKM2]}
we considered a more general, additive version of the Haight--Weizs\"aker problem.
After a simple exponential/logarithmic substitution and change of variables
one obtains
 almost everywhere
convergence questions for the series
$\sum_{{\lambda}\in {\Lambda}}f(x+ {\lambda})$ for non-negative functions
defined on $ {\ensuremath {\mathbb R}}$.
Taking  $\Lambda=\{\log n:
n=1,2,... \}$
we obtain an ``additive" version of the question answered in
Theorem \ref{*HWth}.
Of course, one can consider other infinite, unbounded sets $ {\Lambda}$,
different from $\{\log n:
n=1,2,... \}$.

In fact, in \cite{[H2]} Haight already considered this more general case  in the original ``multiplicative" setting.
He proved, that for any countable set $ {\Lambda} {\subset}[0,+ {\infty})$   such that
the only accumulation point of $\Lambda$ is $+ {\infty}$ there exists a measurable set $E {\subset} (0,+ {\infty})$
such that choosing $f=\chi_{E}$
we have
$\sum_{{\lambda}\in {\Lambda}}f( {\lambda} x)< {\infty}$, for any $x$ but
$\int_{{\ensuremath {\mathbb R}}^+} f(x)dx= {\infty}.$

In \cite{[BKM1]}  we introduced the notion of  type $1$ and type $2$ sets.
Given $\Lambda$ an unbounded, infinite discrete set of nonnegative numbers, and
a measurable $f: {\ensuremath {\mathbb R}}\to [0,+ {\infty})$, we consider the sum
$$s(x)=s_{f}(x)=\sum_{\lambda\in\Lambda}f(x+\lambda),$$
and the complementary subsets of $ {\ensuremath {\mathbb R}}$:
$$C=C(f,\Lambda)=\{x: s(x)< {\infty}\},\qquad
D=D(f,\Lambda)=\{x:s(x)= {\infty}\}.$$

\begin{definition}\label{def1} The set $\Lambda$ is type $1$ if, for every $f$, either
$C(f,\Lambda)= {\ensuremath {\mathbb R}}$ a.e.\!\! or $C(f,\Lambda)= {\emptyset}$ a.e. (or equivalently
$D(f,\Lambda)= {\emptyset}$ a.e. or $D(f,\Lambda)= {\ensuremath {\mathbb R}}$ a.e.). Otherwise, $\Lambda$
is type $2$. \end{definition}

That is, for type $1$ sets we have a ``zero-one" law for the almost everywhere
convergence properties of the series $\sum_{\lambda\in\Lambda}f(x+\lambda)$,
while for type $2$ sets the situation is more complicated.

\begin{example}\label{*exdyada}
Set $\Lambda=\cup_{k \in {{{\ensuremath {\mathbb N}}}}}\Lambda_{k},$ where $\Lambda_{k}=2^{-
k} {{{\ensuremath {\mathbb N}}}}\cap [k,{k+1}).$
In Theorem 1 of \cite{[BKM1]} it is proved that $\Lambda$ is type $1$.
In fact, in a slightly more general version it is shown that
if $(n_{k})$ is an increasing sequence of
positive integers  and $\Lambda=\cup_{k\in  {{{\ensuremath {\mathbb N}}}}}\Lambda_{k}$ where $\Lambda_{k}=2^{-
k} {{{\ensuremath {\mathbb N}}}}\cap [n_{k},n_{k+1})$
then $\Lambda$ is type $1$.
\end{example}

\begin{example}\label{*exdyadb}
Let
$(n_{k})$ be a given increasing sequence of positive integers. 
 By Theorem 3 of \cite{[BKM1]} 
there is
an increasing sequence of integers $(m(k))$ such that the set
$\Lambda=\cup_{k\in  {{{\ensuremath {\mathbb N}}}}}\Lambda_{k}$ with $\Lambda_{k}=2^{-m(
k)} {{{\ensuremath {\mathbb N}}}}\cap [n_{k},n_{k+1})$ is type $2$.
\end{example}

According to Theorem 6 of \cite{[BKM1]}
type $2$ sets  form a dense
open subset in the box topology of discrete sets while type $1$ sets  form a closed
nowhere dense set. Therefore type $2$ is typical in the Baire
category sense in this topology. Indeed, this is in line with our experience
that it is more difficult to find and verify that a $ {\Lambda}$ is type $1$.

\begin{definition}\label{asydens} The unbounded, infinite  discrete set $\Lambda=\{{\lambda}_{1}, {\lambda}_{2},... \}$, $ {\lambda}_{1}< {\lambda}_{2}<...$ is asymptotically dense if $d_{n}= {\lambda}_{n}- {\lambda}_{n-1}\to 0$, or equivalently:
$$\forall a>0,\quad \lim_{x\to\infty}\#(\Lambda\cap [x,x+a])=\infty.$$
%If $d_{n}$ tends to zero monotone decreasingly, we speak about
%decreasing gap asymptotically dense sets.

If  $\Lambda$ is not asymptotically dense we say that it is asymptotically lacunary.
\end{definition}

We recall Theorem 4 from of \cite{[BKM1]}
\begin{theorem}\label{*thlacu}
If $\Lambda$ asymptotically lacunary, then $\Lambda$ is
type $2$. Moreover, for some $f\in C_{0}^{+}( {{{\ensuremath {\mathbb R}}}})$, there exist
intervals $I$ and $J$, $I$ to the left of $J$, such that $C(f,\Lambda)$
contains $I$ and $D(f,\Lambda)$ contains $J.$
\end{theorem}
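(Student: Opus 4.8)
The plan is to use the long gaps that lacunarity forces on $\Lambda$ as ``shelters'': I will place a divergent family of bumps inside these gaps, anchored so that for $x$ in a right-hand interval the translates land on the bumps, while for $x$ in a slightly left interval every translate that could meet a bump falls inside an empty gap. First I would extract the gaps: since $\Lambda$ is asymptotically lacunary, $d_n=\lambda_n-\lambda_{n-1}\not\to0$, so there is an $\epsilon>0$ and indices $n_1<n_2<\cdots$ with $\lambda_{n_k}-\lambda_{n_k-1}\ge\epsilon$. The intervals $G_k=(\lambda_{n_k-1},\lambda_{n_k})$ are then pairwise disjoint, of length at least $\epsilon$, contain no point of $\Lambda$, and have left endpoints $\lambda_{n_k-1}\in\Lambda$.

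Next I would build the witness function. Fix $c=\epsilon/2$ and small $\delta,\eta>0$ with $\delta+3\eta<\epsilon$, and for each $k$ let $g_k$ be a nonnegative continuous trapezoidal bump of height $h_k=1/k$ that equals $h_k$ on $[\lambda_{n_k-1}+c,\lambda_{n_k-1}+c+\delta]$ and is supported in $[\lambda_{n_k-1}+c-\eta,\lambda_{n_k-1}+c+\delta+\eta]\subset G_k$. Setting $f=\sum_k g_k$, the supports lie in distinct gaps, hence are disjoint and mutually separated, and $h_k\to0$, so $f$ is a well-defined element of $C_0^+(\R)$.

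The core of the argument is to compare two translated copies of the comb of gaps. For the right-hand interval $J=(c,c+\delta)$ and any $k$, the anchor term $\lambda=\lambda_{n_k-1}$ satisfies $x+\lambda_{n_k-1}\in(\lambda_{n_k-1}+c,\lambda_{n_k-1}+c+\delta)$, i.e.\ it lands on the plateau, so $g_k(x+\lambda_{n_k-1})=h_k$ and $s(x)\ge\sum_k h_k=\sum_k 1/k=+\infty$; thus $J\subset D(f,\Lambda)$. For the left-hand interval $I=(c-2\eta,c-\eta)$, which lies strictly to the left of $J$, I would show that no translate meets any bump: if $g_k(x+\lambda)>0$ then $x+\lambda\in\mathrm{supp}(g_k)$, forcing $\lambda\in[\lambda_{n_k-1}+c-\eta-x,\lambda_{n_k-1}+c+\delta+\eta-x]$, and for $x\in I$ the bounds $c-2\eta<x<c-\eta$ squeeze this into $(\lambda_{n_k-1},\lambda_{n_k-1}+\delta+3\eta)\subset G_k$. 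As $G_k$ contains no point of $\Lambda$, no such $\lambda$ exists, so $s\equiv0$ on $I$ and $I\subset C(f,\Lambda)$.

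Since $I$ and $J$ are intervals of positive measure with $I$ to the left of $J$, this makes $C(f,\Lambda)$ and $D(f,\Lambda)$ both of positive measure, so $\Lambda$ is type $2$ and the ``moreover'' clause holds. I expect the delicate step to be the convergence on $I$: one must guarantee that \emph{no} element of $\Lambda$ --- not merely the anchor $\lambda_{n_k-1}$ --- activates any bump for $x\in I$. This is exactly why the bumps are confined to the point-free gaps and why their total width $\delta+3\eta$ is chosen below $\epsilon$; shrinking $\eta,\delta$ also keeps $I$ nonempty and to the left of $J$. The verification that $f\in C_0^+$ (disjoint, separated supports and $h_k\to0$) is routine once the bumps are placed.
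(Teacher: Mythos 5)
Your construction is correct, and it is essentially the standard gap--and--bump argument for this statement. A point of order: the present paper does not prove this theorem at all --- it is recalled verbatim from Theorem 4 of \cite{[BKM1]} --- so there is no in-paper proof to compare against; your proof makes the quoted result self-contained. The skeleton is sound: $d_n\not\to 0$ yields $\epsilon>0$ and infinitely many $\Lambda$-free gaps $G_k=(\lambda_{n_k-1},\lambda_{n_k})$ of length at least $\epsilon$; for $x\in J=(c,c+\delta)$ the anchor term $\lambda_{n_k-1}\in\Lambda$ lands on the $k$-th plateau, so $s(x)\ge\sum_k 1/k=\infty$; and for $x\in I=(c-2\eta,c-\eta)$ any $\lambda\in\Lambda$ with $g_k(x+\lambda)>0$ would have to lie in $(\lambda_{n_k-1},\lambda_{n_k-1}+\delta+3\eta)$, which is $\Lambda$-free exactly because $\delta+3\eta<\epsilon$.

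One small slip in the constants, which is harmless but worth fixing: with only $\delta+3\eta<\epsilon$ the claimed inclusion $\mathrm{supp}(g_k)\subset G_k$ can fail, since the right endpoint of the support is $\lambda_{n_k-1}+c+\delta+\eta$ with $c=\epsilon/2$, so the inclusion actually needs $\delta+\eta\le\epsilon/2$ (for instance $\delta=0.9\epsilon$, $\eta=0.03\epsilon$ satisfies your constraint yet pokes out of a gap of length exactly $\epsilon$). Nothing in your argument really uses that inclusion: disjointness and separation of the supports follow from $\delta+2\eta<\epsilon$ (the right end of $\mathrm{supp}(g_k)$ is $\lambda_{n_k-1}+c+\delta+\eta$, while the left end of $\mathrm{supp}(g_{k+1})$ is at least $\lambda_{n_k}+c-\eta\ge\lambda_{n_k-1}+\epsilon+c-\eta$), local finiteness of the supports gives $f\in C_{0}^{+}(\R)$, and the convergence step on $I$ uses only the $\Lambda$-freeness of $(\lambda_{n_k-1},\lambda_{n_k-1}+\delta+3\eta)$, not the support inclusion. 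If you want the inclusion to hold literally as stated, simply strengthen the requirement to $\delta+3\eta<\epsilon/2$.
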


We denote the non-negative continuous functions on $ {\ensuremath {\mathbb R}}$ by $C^{+}( {\ensuremath {\mathbb R}})$,
and if, in addition these functions tend to zero as $x\to + {\infty}$ they belong to
$C^{+}_{0}( {\ensuremath {\mathbb R}})$.

In \cite{[BKM1]} we gave some necessary and some sufficient conditions
for  a set $ {\Lambda}$ to be type $2$. A complete characterization of type $2$
sets is still unknown. We recall here
from \cite{[BKM1]}
the theorem concerning the Haight--Weizs\"aker problem. This contains the additive version of the result of Theorem \ref{*HWth}
with some additional information.

\begin{theorem}\label{*BKMHWth} {The set $\Lambda=\{\log n:
n=1,2,... \}$ is type $2$. Moreover, for some $f\in
C_{0}^{+}( {\ensuremath {\mathbb R}}),$ $C(f, {\Lambda})$ has full measure on the half-line
$(0,\infty)$ and $D(f, {\Lambda})$ contains the half-line $(-\infty,0)$.
If for each $c, \int_c^{+\infty}e^yg(y)dy < +\infty$, then $C(g,\Lambda) =  {\ensuremath {\mathbb R}}$ a.e.
If $g\in C_{0}^{+}( {\ensuremath {\mathbb R}})$ and $C(g,\Lambda)$ is not of the first (Baire) category, then
$C(g, {\Lambda})= {\ensuremath {\mathbb R}}$ a.e. Finally, there is some $g\in C_{0}^+( {\ensuremath {\mathbb R}})$ such that $C(g,\Lambda) =  {\ensuremath {\mathbb R}}$ a.e. and
$\int_0^{+\infty}e^yg(y)dy = +\infty$.
}
\end{theorem}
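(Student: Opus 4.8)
The plan is to treat the bundled assertions in a logical order, deriving the deeper ones from a single elementary counting identity. Writing $s(x)=\sum_{n=1}^{\infty}g(x+\log n)$, the starting point is the Tonelli computation, valid for any $a<b$,
\[
\int_a^b s(x)\,dx=\sum_{n=1}^{\infty}\int_{a+\log n}^{b+\log n}g(u)\,du=\int_{\R}g(u)\,M(u)\,du,\qquad M(u)=\#\{n\ge 1:\ e^{u-b}\le n\le e^{u-a}\}.
\]
Since $M(u)=0$ for $u<a$ and $c_1(a,b)\,e^{u}\le M(u)\le c_2(a,b)\,e^{u}$ once $u$ is large, the size of $\int_a^b s$ is governed by $\int^{\infty}e^{u}g(u)\,du$. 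This immediately yields the third assertion: if $\int_c^{\infty}e^{y}g(y)\,dy<\infty$ for every $c$, then $\int_a^b s<\infty$ for every window $[a,b]$, so $s<\infty$ a.e.\ on each window, and hence $C(g,\Lambda)=\R$ a.e.

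For the Baire-category statement I would use that, for continuous $g\ge 0$, the partial sums of $s$ are continuous and increase to $s$, so $s$ is lower semicontinuous and $C(g,\Lambda)=\{s<\infty\}=\bigcup_k\{s\le k\}$ is an $F_\sigma$ with each $\{s\le k\}$ closed. If $C(g,\Lambda)$ is not of the first category, some closed set $\{s\le k\}$ fails to be nowhere dense, hence contains an interval $I$; then $\int_I s\le k\,|I|<\infty$, and the lower bound $M(u)\ge c\,e^{u}$ forces $\int^{\infty}e^{u}g(u)\,du<\infty$. The hypothesis of the third assertion then holds, and we conclude $C(g,\Lambda)=\R$ a.e.

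The heart of the theorem is the witness construction giving type $2$ (the first two assertions). Here I would pass to the multiplicative picture via $x=\log t$ and $F(u)=f(\log u)$, under which $s(\log t)=\sum_{n=1}^{\infty}F(nt)$; the requirements become $\sum_n F(nt)=\infty$ for every $t\in(0,1)$ and $<\infty$ for a.e.\ $t>1$, with $F(u)\to 0$ as $u\to\infty$. This is precisely the additive form of the Haight--Weizs\"aker phenomenon of Theorem \ref{*HWth}, and I would build $F$ from bumps placed at multiplicatively structured scales, tuned to the local gap $\asymp e^{-\ell}$ of $\{\log n\}$ at height $\ell$: a.e.\ convergence on $t>1$ is obtained by a Borel--Cantelli estimate on the small-measure set of $t$ that meet a given bump, while divergence must be forced simultaneously at \emph{every} $t\in(0,1)$. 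This last point---an uncountable covering requirement rather than an almost-everywhere one---is the main obstacle, and it is where the self-similarity $\log(mn)=\log m+\log n$ is exploited, together with the monotonicity $s(x+\log m)=\sum_{k\in m\N}g(x+\log k)\le s(x)$, which already makes $C$ ``stable to the right'' and $D$ ``stable to the left''.

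Finally, the sharpness assertion asks for $g\in C_0^{+}(\R)$ with $\int_0^{\infty}e^{y}g(y)\,dy=\infty$ yet $C(g,\Lambda)=\R$ a.e. By the counting identity this means $\int_a^b s=\infty$ on every window although $s<\infty$ a.e., i.e.\ $s$ is a.e.\ finite but non-integrable. I would again use thin bumps, now at integer levels $p_k=\log n_k$ with $n_{k+1}$ a multiple of $n_k$, heights $h_k\to 0$ with $\sum_k h_k=\infty$, and rapidly shrinking widths. The multiplicative nesting makes the exact ``central'' hits propagate through all later bumps, so that $s=\infty$ on the countable (hence null) set of such $x$ while $\int e^{y}g=\infty$ via $\int e^{y}g\asymp\sum_k h_k\,(2\delta_k e^{p_k})$. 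The delicate part---and the real obstacle here---is to confine this divergence to a null set, i.e.\ to show that for a.e.\ $x$ the correlated contributions $\sum_{k\in\mathrm{Hit}(x)}h_k$ still converge; this is exactly what rules out the naive independent-bump design, for which Borel--Cantelli would instead give a.e.\ divergence. Assembling these four pieces, the first assertion (type $2$) is immediate from the witness of the second.
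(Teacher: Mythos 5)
First, a point of comparison: the present paper does not prove this theorem at all --- it is recalled verbatim from \cite{[BKM1]} as background --- so your attempt can only be measured against the statement itself, not against an in-paper argument. On that basis, you have genuinely proved two of the five assertions. Your Tonelli counting identity $\int_a^b s=\int_{\R} g(u)M(u)\,du$ with $M(u)=0$ for $u<a$ and $M(u)\asymp e^{u}$ for large $u$ correctly yields the implication ``$\int_c^{+\infty}e^{y}g(y)\,dy<\infty$ for all $c$ $\Rightarrow$ $C(g,\Lambda)=\R$ a.e.'', and lower semicontinuity of $s$ for continuous $g\geq 0$ (so that $\{s\le k\}$ is closed, and not nowhere dense implies it contains an interval) combined with the same identity correctly settles the Baire-category assertion.

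The gap is that the remaining three assertions --- which are the substance of the theorem --- are left as plans with the decisive steps explicitly deferred. (a) For the half-line witness, reducing to Theorem \ref{*HWth} and propagating via $s(x+\log m)\le s(x)$ (equivalently $D-\log m\subset D$, $C+\log m\subset C$) only gives $D\supset(-\infty,c_0)$ and $C$ of full measure on $(c_1,\infty)$ for some $c_0\le c_1$, with an uncontrolled transition zone $[c_0,c_1]$ in between; no translation or normalization removes it, because ``$D$ contains no half-line extending past $c$'' does not imply ``$C$ has full measure past $c$''. Obtaining the clean dichotomy at a single point requires a dedicated construction, which you describe only as ``bumps tuned to the local gap'' while naming the everywhere-divergence requirement on $(0,1)$ as ``the main obstacle'' without overcoming it. Moreover, Theorem \ref{*HWth} supplies a characteristic function, whereas here the witness must lie in $C_0^{+}(\R)$, so a smoothing step preserving divergence at \emph{every} point of a half-line would also have to be supplied. (b) For the final sharpness assertion, your own identity shows that $\int_0^{+\infty}e^{y}g(y)\,dy=\infty$ forces $\int_I s=\infty$ on every interval, so the whole content is to make $s$ nonetheless a.e.\ finite; you identify exactly this (controlling the correlated hits $\sum_{k\in\mathrm{Hit}(x)}h_k$) as ``the real obstacle'' and stop there. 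Since the type $2$ claim rests on the unproved half-line witness, assertions one, two and five remain unestablished; the proposal is a correct proof of the two regularity implications, not of the theorem.
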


From the point of view of our current paper the following question
(QUESTION 1 in \cite{[BKM1]}) is the most relevant:
\begin{question}\label{*q1bkm1}
Is it true that $\Lambda$ is type $2$ if and only if there
is a $\{0,1\}$ valued measurable function $f$ such that both
$C(f,\Lambda)$ and $D(f,\Lambda)$ have positive Lebesgue measure?
\end{question}

 In Section \ref{*secwit} we give a positive answer to this question.
 This result is very useful if one tries to study type $2$
sets. In later sections of this paper and in another forthcoming paper
\cite{[BHMVa]} one can see applications of this result.

In Section \ref{*secdel} we take some type $1$ sets from Example
\ref{*exdyada}  and   investigate the effect of
random deletion of elements with probability $q$.
We see in Theorem \ref{prob p theorem} that in the basic case
of Example \ref{*exdyada}, that is, when $n_{k}=k$ after randomization
$ {\Lambda}$ stays type $1$.
However in Theorem \ref{*randt2} we show that for some other $n_{k}$s one
can turn a type $1$ set into a type $2$ set by random deletions.

 %%%%%%%%%%%%%%%%%%%%%%%%%%%%%%%%%%%%%%%

In \cite{[BKM1]} two questions were  stated.
We have already mentioned Question 1, which is the main motivation
for our paper.
Question 2 was the following:
{\it Given open sets $G_1$ and $G_2$ when is it possible
to find $\Lambda$ and $f$ such that $C(f,\Lambda)$ contains $G_1$ and
$D(f,\Lambda)$ contains $G_2$?}
This question was essentially answered in our recent paper
 \cite{[BMV]}.

In the periodic case, corresponding to the Khinchin conjecture,
several papers
considered weighted averages $\sum c_{k}f(n_{k} x)$.
See for example \cite{[ABW]}, \cite{[BWa]},  and  \cite{[BWb]}.
This motivates the following definition:

\begin{definition}\label{*defctype}
We say that an asymptotically dense set $\Lambda$ is $\pmb{c}$-type $2$ with respect to the positive sequence $\pmb{c}=(c_n)_{n=1}^{\infty}$,
 if there exists a nonnegative measurable ``witness" function $f$ such that the series $s_{\pmb{c}}(x)=s_{\pmb{c},f}(x)=\sum_{n=1}^{\infty}c_n f(x+\lambda_n)$ does not converge almost everywhere and does not diverge almost everywhere either.
Of course, those $ {\Lambda}$ which are not $\pmb{c}$-type $2$ will be called
$\pmb{c}$-type $1$.
\end{definition}

In the sense of our earlier definition, $\Lambda$ is type $2$ if it is $\pmb{c}$-type $2$ with respect to $c_n\equiv 1$.
We also say in this case that $ {\Lambda}$ is $\pmb{1}$-type $2$. For the corresponding convergence and divergence sets
 we introduce the notation $C_{\pmb{c}}(f,\Lambda)$ and $D_{\pmb{c}}(f,\Lambda)$. 
 
In Theorem
 \ref{*thpmbc1}  of Section \ref{*secpmbc}
 we see that
 if a set $\Lambda$ is $\pmb{1}$-type $2$, then it is $\pmb{c}$-type $2$ with respect to any positive sequence $\pmb{c}$.
The key property behind this theorem is the fact that for  $\pmb{1}$-type $2$
sets there is a always a witness function which is a characteristic function
according to the result of Theorem \ref{*thwit}. This motivates the following definition.

 \begin{definition}\label{*defchi}
A positive sequence $\pmb{c}$ is a $\chi$-sequence
if for any $\pmb{c}$-type  $2$  set $ {\Lambda}$
there is always a characteristic function to witness this property.
\end{definition}

It would be interesting to see whether Theorem  \ref{*thpmbc1} holds for
all $\chi$-sequences.

It is also worthful to notice that there exist sequences which are not $\chi$-sequences. Indeed, if $\sum c_n$ converges, then for any function $f$ bounded by $K$ we have $s_{\pmb{c},f}(x)=\sum_{n=1}^{\infty}c_{n}f(x+\lambda_{n})\leq \sum_{n=1}^{\infty}c_{n}K$,
and hence $s_{\pmb{c},f}$ converges everywhere. 
On the other hand, by Theorem \ref{*thpmbc1} there are  
$\pmb{c}$-type $2$ sets $\Lambda$, in this case with unbounded
witness functions.

 Hence it is also a natural question for further research to characterize $\chi$-sequences.

Finally, in Theorem \ref{*univt2seq} we prove that there are sequences
$\pmb{c}$ such that
   every discrete set $\Lambda$ is $\pmb{c}$-type  $2$.

\section{Preliminaries}\label{*secprel}

In the proof of Proposition 1 of \cite{[BKM1]} we used a simple argument
based on the Borel--Cantelli lemma which we state here as the following lemma.

\begin{lemma}\label{*lemmodBKM}
Suppose that $ {\Lambda}$ is  type $2$ and $f$ is a bounded witness function for $ {\Lambda}$.
 If we modify $f$ on a set $E$ such that $\mu(E\cap(x,\infty))\leq{\epsilon(x)}$ where $\epsilon(x)$ is a positive decreasing function tending to $0$ at infinity, and satisfying
\begin{equation}\label{*modeq}
\sum_{l\in\mathbb{N}}\epsilon(l-K)\#(\Lambda\cap[l,l+1))<\infty,
\end{equation}
then the convergence and divergence sets in $[-K,K]$  for the modified function $\widetilde{f}$ do not change apart from a set of measure $0.$
\end{lemma}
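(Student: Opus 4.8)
The plan is to reduce the statement to a single application of the measure-theoretic Borel--Cantelli lemma, in the spirit of the argument for Proposition~1 of \cite{[BKM1]} referred to above. The guiding observation is that $s_f(x)=\sum_{\lambda\in\Lambda}f(x+\lambda)$ and $s_{\widetilde f}(x)=\sum_{\lambda\in\Lambda}\widetilde f(x+\lambda)$ can differ only through those indices $\lambda$ for which the translate $x+\lambda$ lands in the modification set $E$. Hence, if I can show that for almost every $x\in[-K,K]$ this occurs for only finitely many $\lambda$, then the two series differ by a finite sum of finite numbers and therefore converge or diverge simultaneously; since $C$ and $D$ are complementary, this gives the claim for both the convergence and the divergence sets at once.

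First I would introduce, for each $\lambda\in\Lambda$, the ``bad set''
\[
A_\lambda=\{x\in[-K,K]:x+\lambda\in E\}=(E-\lambda)\cap[-K,K].
\]
By translation invariance of $\mu$ together with the hypothesis $\mu(E\cap(x,\infty))\le\epsilon(x)$,
\[
\mu(A_\lambda)=\mu\bigl(E\cap[\lambda-K,\lambda+K]\bigr)\le\mu\bigl(E\cap(\lambda-K,\infty)\bigr)\le\epsilon(\lambda-K).
\]
Next I would sum over $\lambda$ and compare with \eqref{*modeq}. Grouping the elements of $\Lambda$ by the unit interval $[l,l+1)$ that contains them, and using that $\epsilon$ is decreasing so that $\epsilon(\lambda-K)\le\epsilon(l-K)$ whenever $\lambda\in[l,l+1)$, I obtain
\[
\sum_{\lambda\in\Lambda}\mu(A_\lambda)\le\sum_{\lambda\in\Lambda}\epsilon(\lambda-K)\le\sum_{l\in\mathbb{N}}\epsilon(l-K)\,\#(\Lambda\cap[l,l+1))<\infty.
\]
The finitely many indices $l$ with $l-K<0$ are harmless, since each $\Lambda\cap[l,l+1)$ is finite and $\epsilon$ is finite there, so they contribute only a finite amount.

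With $\sum_{\lambda}\mu(A_\lambda)<\infty$ established, Borel--Cantelli yields $\mu\bigl(\limsup_{\lambda}A_\lambda\bigr)=0$; that is, for almost every $x\in[-K,K]$ one has $x\in A_\lambda$ for only finitely many $\lambda$. For such an $x$ we have $\widetilde f(x+\lambda)=f(x+\lambda)$ for all but finitely many $\lambda$, and since $f$ is bounded (hence finite) and $\widetilde f$ is finite-valued, the difference between $s_f(x)$ and $s_{\widetilde f}(x)$ is a finite sum of finite terms. Consequently $s_f(x)<\infty$ if and only if $s_{\widetilde f}(x)<\infty$, so $C(f,\Lambda)$ and $C(\widetilde f,\Lambda)$, and therefore $D(f,\Lambda)$ and $D(\widetilde f,\Lambda)$, coincide on $[-K,K]$ up to a null set.

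The one piece of genuine bookkeeping is the passage from $\sum_{\lambda}\epsilon(\lambda-K)$ to the integer-indexed series in \eqref{*modeq}: this is exactly where the monotonicity of $\epsilon$ is used, and it explains why the hypothesis is phrased in terms of the counting function $\#(\Lambda\cap[l,l+1))$ rather than directly in terms of the $\lambda$'s. Everything else is a direct Borel--Cantelli estimate, so I expect no serious obstacle beyond getting this comparison and the measure estimate for $A_\lambda$ right.
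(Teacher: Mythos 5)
Your proposal is correct and follows exactly the route the paper intends: the lemma is stated as a packaging of the ``simple argument based on the Borel--Cantelli lemma'' from Proposition 1 of \cite{[BKM1]}, and your chain --- the estimate $\mu\bigl((E-\lambda)\cap[-K,K]\bigr)\le\epsilon(\lambda-K)$, the grouping over unit intervals $[l,l+1)$ using monotonicity of $\epsilon$ to reach \eqref{*modeq}, and the first Borel--Cantelli lemma giving that almost every $x\in[-K,K]$ has $x+\lambda\in E$ for only finitely many $\lambda$ --- is precisely that argument. The conclusion that the two series then differ by finitely many finite terms, hence converge or diverge together, is also handled correctly, so there is nothing to add.
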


\section{Characteristic functions are witness functions for type $2$}
\label{*secwit}

\begin{theorem}\label{*thwit} Suppose that $\Lambda$ is type $2$, that is there exists a measurable witness function $f$ such that both $D(f,\Lambda)$ and $C(f,\Lambda)$ have positive measure. Then there exists a witness function $g$ which is the characteristic function of an open set and both $D(g,\Lambda)$ and $C(g,\Lambda)$ have positive measure.\end{theorem}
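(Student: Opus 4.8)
The plan is to turn an arbitrary witness $f$ into the characteristic function of an open set by a random construction, using the two halves of the Borel--Cantelli lemma to transfer the convergence/divergence dichotomy from $f$ to $\chi_E$. Throughout I fix $K$ so large that both $C(f,\Lambda)\cap[-K,K]$ and $D(f,\Lambda)\cap[-K,K]$ have positive measure; by Lemma \ref{*lemmodBKM} it is enough to produce $g$ that is a witness on $[-K,K]$, and only the values of $f$ on $\bigcup_{\lambda}[\lambda-K,\lambda+K]$ matter.

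First I would reduce to a bounded witness. Put $h=\min(f,1)$ and $F=\{f>1\}$. Since $h\le f$ we get $C(h)\supseteq C(f)$, and on $C(f)$ the terms $f(x+\lambda)\to 0$, so $\chi_F(x+\lambda)=0$ eventually and $C(\chi_F)\supseteq C(f)$ too. Conversely, for $x\in D(f)$ either $\sum_{\lambda}\min(f(x+\lambda),1)=\infty$, whence $x\in D(h)$, or the terms with $f(x+\lambda)>1$ are infinite in number, whence $x\in D(\chi_F)$; thus $D(f)\subseteq D(h)\cup D(\chi_F)$ and at least one of $h,\chi_F$ is a bounded witness. So I may assume $0\le f\le 1$. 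Next I replace $f$ by a step function preserving the convergence and divergence sets a.e. on $[-K,K]$: partition $\R$ into cells $\{J\}$ and set $\bar f=\sum_J p_J\chi_J$ with $p_J=\tfrac1{\mu(J)}\int_J f\in[0,1]$. Choosing the cells fine enough on each unit interval, exactly in the spirit of condition (\ref{*modeq}), I can force
$$\sum_{\lambda\in\Lambda}\int_{\lambda-K}^{\lambda+K}|f-\bar f|\,d\mu<\infty,$$
since $\bar f\to f$ in $L^1_{\mathrm{loc}}$ as the cells shrink. By Tonelli this gives $\sum_\lambda|f(x+\lambda)-\bar f(x+\lambda)|<\infty$ for a.e. $x\in[-K,K]$, so the non-negative series $\sum_\lambda f(x+\lambda)$ and $\sum_\lambda\bar f(x+\lambda)$ converge together and $\bar f$ is again a witness on $[-K,K]$. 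I additionally demand that the same cells be \emph{shorter than every gap $d_n$ of $\Lambda$} in their region, which is harmless because shrinking cells only improves the $L^1$ estimate.

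Now comes the random construction. In each cell $J$ I place an \emph{open} subinterval $E_J\subseteq J$ of relative length $p_J$, positioned uniformly at random (cyclically, so every point of $J$ is covered with probability exactly $p_J$), independently across the countably many cells; set $E=\bigcup_J E_J$, which is open. Fix $x\in[-K,K]$ and let $A_\lambda=\{x+\lambda\in E\}$. Then $P(A_\lambda)=p_{J(x+\lambda)}=\bar f(x+\lambda)$, and because each cell is shorter than the gaps of $\Lambda$, the points $\{x+\lambda:\lambda\in\Lambda\}$ lie in distinct cells, so the events $A_\lambda$ are \emph{independent}. Hence $\sum_\lambda P(A_\lambda)=\sum_\lambda\bar f(x+\lambda)$, and Borel--Cantelli applies in both directions: if $x\in C(\bar f)$ the sum is finite and a.s. only finitely many $A_\lambda$ occur, while if $x\in D(\bar f)$ the sum is infinite and, by independence, a.s. infinitely many occur. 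Thus for each fixed $x$, almost surely $x\in C(\chi_E)\iff x\in C(\bar f)$.

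Finally I would pass from ``for each $x$, a.s.'' to ``a.s., for a.e.\ $x$'' by Tonelli on the product of the probability space with $[-K,K]$: the expected measure of $\big(C(\chi_E)\triangle C(\bar f)\big)\cap[-K,K]$ is $0$, and likewise for $D$. Hence for almost every realization $E$ the sets $C(\chi_E)\cap[-K,K]$ and $D(\chi_E)\cap[-K,K]$ have the same positive measures as those of $\bar f$, so $g=\chi_E$ is the desired open-set witness. I expect the main obstacle to be the simultaneous choice of cells: they must be fine enough that $\bar f$ preserves the convergence and divergence sets, yet shorter than the possibly vanishing gaps $d_n$ so that the $A_\lambda$ are genuinely independent for every $x$, independence being precisely what the divergence half of Borel--Cantelli needs. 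Arranging the coverage probabilities to equal $\bar f(x+\lambda)$ exactly (so that no comparison constants corrupt the two-sided dichotomy) is the reason for the step-function reduction, and the Tonelli interchange is what guarantees a single good realization exists.
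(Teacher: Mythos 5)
Your proposal is correct, and its core is the same as the paper's: reduce to a step‑function witness whose cells each catch at most one translate $x+\lambda$ of any fixed $x$, randomize independently across cells so that every point is covered with probability exactly equal to the step‑function value, apply the first Borel--Cantelli lemma on $C$ and the second (via the independence) on $D$ for each fixed $x$, and finish with a Tonelli interchange on the product space to extract one good realization. The implementations differ in two places, both arguably in your favor for brevity. For the reduction, the paper runs a chain $f_0,f_1,f_2,f_3$: make $f$ bounded away from zero, truncate at $1$, round to powers of $2$ so the range is locally finite, then approximate level sets by finite unions of half‑open intervals and invoke Lemma \ref{*lemmodBKM}; your conditional‑expectation step function $\bar f$ together with the summability $\sum_{\lambda}\int_{\lambda-K}^{\lambda+K}|f-\bar f|\,d\mu<\infty$ and Tonelli reaches the same point more directly (your truncation dichotomy $h=\min(f,1)$ versus $\chi_{\{f>1\}}$ also differs from, but is as good as, the paper's). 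For the randomization, the paper includes whole cells with probability $2^{-\kappa_n}$ (so its set is a union of half‑open cells, fixed up afterwards), while your cyclically placed open sub‑arc of relative length $p_J$ handles arbitrary $p_J\in[0,1]$ and is essentially open from the start. Three routine patches you should make: (i) ``shrinking cells only improves the $L^1$ estimate'' is not literally true (the mean is not the $L^1$‑optimal constant, so refinement need not decrease the error monotonically); what you need, and what holds, is that the error on each unit interval tends to $0$ with the mesh, so the $L^1$ bound and the gap constraint can be satisfied simultaneously; (ii) the Tonelli step requires joint measurability of $\{(\omega,x): x+\lambda\in E(\omega)\}$, which the paper verifies explicitly and which is routine for your arcs; (iii) a wrapped arc is a union of two intervals, and discarding arc endpoints to keep $E$ open makes countably many points (the left endpoints of cells) have coverage probability $0$ rather than $p_J$ --- harmless, since only a null set of $x$ is affected.
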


\begin{proof}

First we observe that it is sufficient to find a suitable $g$ which is the characteristic function of a measurable set: then we can modify it on a set of finite measure which does not change the measure of $D(g,\Lambda)$ and $C(g,\Lambda)$.

Fix bounded sets $D\subset D(f,\Lambda)$ and $C\subset C(f,\Lambda)$ of positive measure, satisfying $C\cup D\subset[-K,K]$ for some $K\in\mathbb{N}$.
We will suitably modify the function $f$ by a sequence of steps such that the function obtained after each step satisfies the condition concerning the measures of $D$ and $C$.
  Consider the intervals $I_1=(-\infty,1)$, $I_2=[1,2)$, $I_3=[2,3)$, .... For $n=1,2,...$
 we will choose sufficiently small real numbers $\varepsilon_n>0$ and define $f_0$ such that $f_0(x)=f(x)+\varepsilon_k$ in $I_k$,
  $C\subset C(f_0,\Lambda)$ and $D\subset D(f_0,\Lambda)$. As $f_0>f$, the second condition is obviously satisfied.
 Furthermore, as $\Lambda$ is discrete and bounded from below, for fixed $n$ there is a bounded number of $\lambda_i$s with $\lambda_i\in I_n-[-K,K]$. Thus by choosing $\varepsilon_n$ small enough, we can ensure that
$$
\sum_{x+\lambda_i\in I_n}f_0(x+\lambda_i)<\frac{1}{2^n}+\sum_{x+\lambda_i\in I_n}f(x+\lambda_i)\text{  for any $x\in C\subset[-K,K]$.  }
$$
 As a consequence, for any $x\in{C}$ we have $s_{f_0}(x)<s_f(x)+1<\infty$, thus $C\subset C(f_0,\Lambda)$, as we stated. Hence $f_0$ is a function such that both $D(f_0,\Lambda)$ and $C(f_0,\Lambda)$ have positive measure,
 and $f_0$ is
 bounded away from zero on any interval of the form $(-\infty,t)$.

Now take $f_1(x)=\min(f_0(x),1)$. For any $x\in D(f_0,\Lambda)$, if the sum $\sum_{\lambda\in\Lambda}f_0(x+\lambda)$ contains infinitely many terms which are at least 1, then these terms immediately guarantee that
\begin{equation} \label{*f/div}
\sum_{\lambda\in\Lambda}f_1(x+\lambda)=\infty.
\end{equation}

On the other hand, if there are only finitely many such terms, then the sums associated to $f_1$ and $f_0$ differ only in these finitely many terms, which also yields \eqref{*f/div}. Then we have $D(f_1,\Lambda)=D(f_0,\Lambda)$, and consequently $C(f_1,\Lambda)=C(f_0,\Lambda)$. Thus we obtained a function $f_1$ which is bounded by 1. Moreover, both $D(f_1,\Lambda)$ and $C(f_1,\Lambda)$ have positive measure, and $f_1$ is bounded away from zero on any interval of the form $(-\infty,t)$.

Given $f_1$, we can construct a function $f_2$ with a rather simple range. Namely, for any $x$ we choose $k_x\in\mathbb{N}$ such that $\frac{1}{2^{k_x}}< f_1(x)\leq\frac{1}{2^{k_x-1}}$. As the range of $f$ is contained by $(0,1]$, there is such a $k_x$. Now take $f_2(x)=\frac{1}{2^{k_x}}$. Since we have $\frac{1}{2}f_1\leq f_2<f_1$, we may deduce $C(f_2,\Lambda)=C(f_1,\Lambda)$ and $D(f_2,\Lambda)=D(f_1,\Lambda)$. Moreover, as $f_1$ is bounded away from zero on any interval of the form $(-\infty,t)$, we have that $f_2$ has finite range in each such interval and vanishes nowhere. We can also assume $f_2\equiv{1}$ in $(-\infty,0)$ .
As $\Lambda$ is discrete and bounded from below the convergence and divergence sets remain the same.

 Consider now the interval $[k-1,k)$ for $k\in\mathbb{N}$. The range of $f_2$ is finite in $[k-1,k)$, let it be $\{c_1,c_2,...,c_l\}$. Now for any level set $\{f_{2}=c_i\}$ we can define a relatively open set $U_i\subset [k-1,k)$ such that $\{f_{2}=c_i\}\cap[k-1,k)\subset U_i$ and
$$
\mu(U_i)<\mu(\{f_{2}=c_i\}\cap[k-1,k))+\frac{\delta_k}{2^i}
$$
for some $\delta_k$ to be chosen later.
 Each $U_i$ is a countable union of intervals. By choosing a sufficiently large finite subset of these intervals we can obtain a set $V_i$ such that
$$
\mu(U_i)<\mu(V_i)+\frac{\delta_k}{2^i}.
$$
The intervals forming $V_i$ are relatively open in $[k-1,k)$. Hence by adding finitely many points to each of them we can get sets
 $V_i'$ which are finite unions of intervals of the form $[x,y)$. Finally, let $V_1^*=V_1'$, and for $i=2,...,l$ let
$$
V_i^*=V_i'\setminus\left(\bigcup_{j=1}^{i-1}V_j^*\right).
$$
Then the sets $V_i^*$ are disjoint and 
each of them
is  a finite union of intervals of the form $[x,y)$ as such intervals form a semialgebra. Moreover, the complement $V^*$ of their union in $[k-1,k)$ is also a set of this form. We define $f_3$ using these sets: on $V_i^*$ let $f_3=c_i$, and on $V^*$ let $f_3=c_1$.

When we redefine our function in $V_i^*$ we modify it in a set of measure at most $\frac{\delta_k}{2^i}$, and when we redefine it in $V^*$ we modify it in a set of measure at most $\sum_{i=1}^{l}\frac{\delta_k}{2^i}$.
Hence $f_2$ and $f_3$ can differ only in a set of measure at most
$$
2\delta_k\sum_{i=1}^{l}\frac{1}{2^i}< 2\delta_k.
$$

Put $\epsilon(x)=\sum_{k\geq{x}}\delta_k$.
If we choose a sufficiently rapidly decreasing sequence $(\delta_k)$ then
 we can ensure that
\begin{equation}\label{*modeqq}
\sum_{l\in\mathbb{N}}\epsilon(l-K)\#(\Lambda\cap[l,l+1))<\infty.
\end{equation}
Since \eqref{*modeqq} is assumption \eqref{*modeq} of Lemma \ref{*lemmodBKM},
if we define $f_3$ in each of the intervals $[k-1,k)$ using the previous procedure, then the convergence and divergence sets are the same for $f_{2}$ and $f_{3}$
 almost everywhere in $[-K,K]$.
 Moreover, the range of $f_3$ is a subset of the range of $f_2$ in any interval $(-\infty,t)$, and each bounded interval can be subdivided into finitely many subintervals of the form $[a,b)$ such that $f_3$ is constant on each of these subintervals. Denote the family of all these subintervals in $\mathbb{R}$ by $\mathcal{I}$. We know that the sum $s_{f_3}$ diverges in $D$ apart from a null-set and converges in $C$ apart from a null-set. For the sake of simplicity we assume that the sum $s_{f_3}$ diverges in the entire set $D$ and converges in the entire set $C$: if that does not hold, we can modify our initial sets.
For ease of notation in the sequel we will denote  $f_3$  by $f$,
in fact it can be assumed that $f$ was originally of this form.

In the following step we replace the family of intervals $\mathcal{I}$ by a ``finer" family $\mathcal{J}$.
 Precisely, if $I\in\mathcal{I}$, first we subdivide it into sufficiently short subintervals $I_1,...,I_{m}$ of equal length such that for any $x\in C\cup D$ we have that $x+\lambda \in I_i$ for at most one $\lambda\in\Lambda$ for any $i=1,2,...,{m}$. 
In order to avoid technical complications, we define them to be closed from the left and open from the right, hence guaranteeing that they are disjoint. As $C\cup D$ is bounded, it is clear that this is possible. The family $\mathcal{J}$ will consist of all the previous 
short intervals for each $I\in \mathcal{I}$.

Now we define a sequence of random variables. Consider the intervals in $\mathcal{J}$ in increasing order: $J_1,J_2,...$. Let $J_n\in\mathcal{J}$. Then we have that $f=2^{-\kappa_n}$ on
$J_n$ for some $\kappa_n\in\mathbb{N}$. We define the sequence $(X_n)$ of random variables such that they are independent and $X_n=1$ with probability $2^{-\kappa_n}$, otherwise $X_n$ is 0. By Kolmogorov's consistency theorem such random variables can be defined on a suitable probability measure space $\Omega$. Given these random variables, we can define a random characteristic function: for any $\omega\in\Omega$ and $x\in\mathbb{R}$ let $g(\omega,x)=X_n(\omega)$ if $x\in{J_n}$.

We claim that almost surely, that is for $\mathbb{P} $ almost every $ {\omega}$
$$
s_g(\omega,x)=\sum_{\lambda\in\Lambda}g(\omega,x+\lambda)
$$
converges in $C$ apart from a $ {\mu}$ null-set, and diverges in $D$ apart from a
$ {\mu}$ null-set. Proving the claim finishes the proof of the theorem as we can define $g=g(\omega)$ for one of the $\omega$s  of these almost sure events.

First let us consider the behaviour of $s_g(\omega,x)$ in $D$. Fix $x\in{D}$. Also fix $\lambda\in\Lambda$. Let us observe that
$$
\mathbb{P}(g(\omega,x+\lambda)=1)=f(x+\lambda).
$$
Indeed if $f(x+\lambda)=2^{-\kappa}$ for some $\kappa\in\mathbb{N}$, we have that $x+\lambda$ lies in an interval $J_n$ where $f=2^{-\kappa}$, thus
$$
\mathbb{P}(g(\omega,x+\lambda)=1)=\mathbb{P}(X_n(\omega)=1)=2^{-\kappa}=f(x+\lambda),
$$
as we claimed. As a consequence, by the definition of $D$ for $x\in{D}$ we clearly have that
\begin{equation}\label{*pgo}
\sum_{\lambda\in\Lambda}\mathbb{P}(g(\omega,x+\lambda)=1)=\sum_{\lambda\in\Lambda}f(x+\lambda)=\infty.
\end{equation}
Observe the events appearing in the leftmost expression. For fixed $\lambda$, the value $g(\omega,x+\lambda)$ depends on at most one of the independent random variables $X_1,X_2,...$. Moreover, by the procedure by which we replaced $\mathcal{I}$ by $\mathcal{J}$,
 for fixed $n$ and $x$ the random variable $X_n$ affects at most one of the values $g(\omega,x+\lambda)$, $\lambda\in  {\Lambda}$.
 Thus by the independence of $(X_n)$, for fixed $x\in{D}$ the events
 \begin{equation}\label{*alx}
 A_{\lambda,x}=\{\omega: g(\omega,x+\lambda)=1\}
 \end{equation}
  are also independent. As the series of their probabilites diverges, by the second Borel--Cantelli lemma we have that with probability one infinitely many of them occur, which is equivalent to the fact that $s_g(\omega,x)=\infty$. Thus for any fixed $x\in{D}$ we obtain $s_g(\omega,x)=\infty$ almost surely.

Now let us define $\Omega_D=\{(\omega,x)\text{  :  } x\in{D}\text{  ,  } s_g(\omega,x)=\infty\}\subset{\Omega\times D}$. We claim that it is measurable. Indeed, let
$$
\Omega_{\lambda_k}=\left\{(\omega,x):g\left(\omega,x+\lambda_k\right)=1\right\}.
$$
It would be sufficient to verify that such a set is measurable as
$$
\Omega_D=\limsup_{k\to\infty}\Omega_{\lambda_k}=\bigcap_{n=1}^{\infty}\bigcup_{k=n}^{\infty}\Omega_{\lambda_k}
$$
clearly holds. Now we simply observe that for fixed $k$ the set $D$ can be subdivided using finitely many intervals in 
each of which $g(\omega,x+\lambda_k)$ depends only on one of the random variables in the sequence $(X_n)$. Consequently, $\Omega_{\lambda_k}$ can be written as a finite union of rectangles, hence it is measurable in the product space, which verifies our claim: $\Omega_D$ is measurable.
 By the earlier observations we obtain for its measure
$$
\mu_{\Omega\times{D}}(\Omega_D)=\int_{\Omega\times{D}} \mathbf{1}_{\Omega_D}d\omega dx=\int_D\left( \int_{\Omega}\mathbf{1}_{\Omega_D}d\omega\right) dx=\int_{D} 1 dx=\mu(D).
$$
Hence $\Omega_D$ is of full measure in the product space $\Omega\times{D}$. Thus
almost surely $s_g(\omega,x)$ diverges in $D$ apart from a null-set, that is $$\mathbb{P}\left(\omega:s_g(\omega,x)=\infty \text{  for a.e.  } x\in{D}\right)=\mathbb{P}(\Omega_D')=1.$$

The behaviour of $s_g(\omega,x)$ in $C$ can be treated similarly. More precisely, the beginning of the argument up to \eqref{*pgo} can be repeated and in place of \eqref{*pgo} we obtain
\begin{equation}
\sum_{\lambda\in\Lambda}\mathbb{P}(g(\omega,x+\lambda)=1)=\sum_{\lambda\in\Lambda}f(x+\lambda)<\infty
\end{equation}
for fixed $x\in{C}$. We do not even have to check independence in this case;
 we can simply apply the first Borel--Cantelli lemma which tells us that with probability $1$ only finitely many of the events $A_{\lambda,x}$
in \eqref{*alx}
 occur for $x\in{C}$, hence  for fixed $x\in{C}$ almost surely $s_g(\omega,x)<\infty$. The conclusion is also similar: the measure of $\Omega_C=\{(\omega,x)\text{  :  } x\in{C}\text{  ,  }s_g(\omega,x)<\infty\}\subset{\Omega\times C}$ equals
$$
\mu_{\Omega\times{C}}(\Omega_C)=\int_{\Omega\times{C}} \mathbf{1}_{\Omega_C}d\omega dx=\int_C\left( \int_{\Omega}\mathbf{1}_{\Omega_C}d\omega\right) dx=\int_{C} 1 dx=\mu(C).
$$
(The measurability of $\Omega_C$ can be verified analogously to that of $\Omega_D$.)
Hence $\Omega_C$ is of full measure in the product space $\Omega\times{C}$. Thus $s_g(\omega,x)$ converges in $C$ apart from a null-set almost surely, as we stated, that is $$\mathbb{P}\left(\omega:s_g(\omega,x)<\infty \text{  for a.e.  } x\in{C}\right)=\mathbb{P}(\Omega_C')=1.$$ This concludes the proof: the choice $g=g(\omega)$ for any $\omega\in\Omega_D'\cap\Omega_C'$ satisfies the claims of the theorem, thus there exists a satisfactory characteristic function.
\end{proof}

\section{Randomly deleted points from $ {\Lambda}$}\label{*secdel}

\begin{lemma}\label{*lemmart} Assume that $C\subset[0,1)$ is Lebesgue measurable, that is $C\in\mathcal{L}[0,1)$. Then for almost every $x\in[0,1)$ we have
\begin{equation}\label{*lemmarteq}
\lim_{n\to -\infty}\frac{\#((x+2^{n}\mathbb{Z})\cap{C})}{2^{-n}}=\mu(C).
\end{equation}
\end{lemma}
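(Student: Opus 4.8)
The plan is to recognize the quantity in \eqref{*lemmarteq} as a \emph{reverse martingale} and to invoke the backwards martingale convergence theorem; this is also what the label of the lemma suggests. Writing $m=-n$ so that $m\to+\infty$, set
$$
A_m(x)=\frac{\#\big((x+2^{-m}\mathbb{Z})\cap C\big)}{2^{m}}.
$$
Since $C\subset[0,1)$, the points of $x+2^{-m}\mathbb{Z}$ lying in $[0,1)$ are exactly the $2^m$ points of the orbit $\{x+k2^{-m}\bmod 1:k=0,\dots,2^m-1\}$, so that $A_m(x)=\frac{1}{2^m}\sum_{k=0}^{2^m-1}\mathbf{1}_C(x+k2^{-m}\bmod 1)$. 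In particular there are no boundary effects: the denominator $2^{m}=2^{-n}$ is precisely the number of lattice points of $x+2^{-m}\mathbb{Z}$ falling in $[0,1)$, so the ratio is always an orbit average between $0$ and $1$.

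First I would introduce, on the probability space $([0,1),\mu)$, the $\sigma$-algebras $\mathcal{G}_m$ consisting of the measurable sets invariant under the rotation $x\mapsto x+2^{-m}\bmod 1$, equivalently the $2^{-m}$-periodic sets. Because invariance under $+2^{-m}$ forces invariance under $+2\cdot 2^{-m}=+2^{-(m-1)}$, the family $(\mathcal{G}_m)_m$ is \emph{decreasing}. A $\mathcal{G}_m$-measurable function is exactly a $2^{-m}$-periodic one, so conditional expectation onto $\mathcal{G}_m$ is averaging over the orbit of size $2^m$; consequently $A_m=\mathbb{E}[\mathbf{1}_C\mid\mathcal{G}_m]$. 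By the tower property together with $\mathcal{G}_{m+1}\subset\mathcal{G}_m$ one gets $\mathbb{E}[A_m\mid\mathcal{G}_{m+1}]=A_{m+1}$, so $(A_m)$ is a reverse martingale. The backwards martingale convergence theorem then yields $A_m\to\mathbb{E}[\mathbf{1}_C\mid\mathcal{G}_\infty]$ almost everywhere (and in $L^1$), where $\mathcal{G}_\infty=\bigcap_m\mathcal{G}_m$.

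It remains to identify the tail $\sigma$-algebra $\mathcal{G}_\infty$ as trivial, and I expect this to be the crux of the argument. A set $E\in\mathcal{G}_\infty$ is invariant under translation by $2^{-m}$ for every $m$, hence under the whole group of dyadic rationals, which is dense in the circle. The cleanest route is Fourier analytic: invariance forces $\widehat{\mathbf{1}_E}(k)=e^{2\pi i k d}\,\widehat{\mathbf{1}_E}(k)$ for every dyadic $d$, and for $k\neq 0$ one may choose a dyadic $d$ with $e^{2\pi i k d}\neq 1$, so every non-zero Fourier coefficient vanishes and $\mathbf{1}_E$ is almost everywhere constant; being $\{0,1\}$-valued, this gives $\mu(E)\in\{0,1\}$. (Alternatively one invokes the Lebesgue density theorem: a common density point of $E$ and of its complement cannot coexist with invariance under arbitrarily small translations.) Therefore $\mathbb{E}[\mathbf{1}_C\mid\mathcal{G}_\infty]=\mathbb{E}[\mathbf{1}_C]=\mu(C)$ almost surely, and translating back to $n=-m\to-\infty$ we obtain \eqref{*lemmarteq}.
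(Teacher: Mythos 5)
Your proposal is correct and follows essentially the same route as the paper: the same $\sigma$-algebras of $2^{n}$-periodic sets (the paper indexes them by $n\in-\mathbb{N}$ as an increasing family rather than your decreasing $\mathcal{G}_m$), the identification of the orbit average as $\mathbb{E}(\mathbf{1}_C\mid\mathcal{F}_n)$ (which the paper verifies by an explicit integral computation), the backwards martingale convergence theorem, and triviality of the tail $\sigma$-algebra (the paper uses your alternative Lebesgue-density argument rather than the Fourier one, but this is immaterial).
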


\begin{proof}
%For ease of notation in the sequel we identify $[0,1)$ with the unit circle $ {\ensuremath {\mathbb T}}$
%and we consider addition modulo $1$.
 Consider the measurable function $\mathbf{1}_C$ and the negatively indexed increasing sequence of $\sigma$-algebras $\mathcal{F}_n=\{(A+2^{n}\mathbb{Z})\cap [0,1): A\in\mathcal{L}([0,1))\}$, $n\in- {\ensuremath {\mathbb N}}$.
Moreover, denote by $\mathcal{F}_{-\infty}$ their intersection. By Lebesgue's density theorem one can easily see that $\mathcal{F}_{-\infty}$ contains only full measure sets and null-sets.
Hence the conditional expectation
$\mathbb{E}(\mathbf{1}_C|\mathcal{F}_{-\infty})$ is almost everywhere constant, therefore it equals $\mathbb{E}(\mathbf{1}_C)=\mu(C)$. On the other hand, by Theorem 5.6.3 in \cite{D} about backwards martingales we know that \begin{equation}\label{*condlim}
\mathbb{E}(\mathbf{1}_C|\mathcal{F}_{n})\to\mathbb{E}(\mathbf{1}_C|\mathcal{F}_{-\infty})
\end{equation}
almost surely as $n\to -\infty$.
Next we show that
\begin{equation}\label{*eoc}
\mathbb{E}(\mathbf{1}_C|\mathcal{F}_{n})(x)=\frac{\#((x+2^{n}\mathbb{Z})\cap{C})}{2^{-n}},\text{   for  }   {\mu}\text{  a.e. $x\in [0,1)$ for all  }n\in- {\ensuremath {\mathbb N}}.
\end{equation}
The function on the right-handside of \eqref{*eoc}
is defined for any $x\in  {\ensuremath {\mathbb R}}$. It is  Lebesgue measurable and 
invariant under translations by
values in $2^{n}\mathbb{Z}$, hence its restriction 
onto $[0,1)$
is clearly $\mathcal{F}_n$ measurable.
Suppose that $A'\in \mathcal{F}_n$.
We denote by $A$ the one periodic set obtained from
$A'$, that is $A=A'+ {\ensuremath {\mathbb Z}}$.
Then  ${A+2^n k}=
{A}$ for any $k\in  {\ensuremath {\mathbb Z}}$.
Moreover,
for any $n\in- {\ensuremath {\mathbb N}}$
 \begin{equation}\label{*sumi}
\int_{A'}\frac{\#((x+2^n\mathbb{Z})\cap C)}{2^{-n}}d {\mu}(x)=
2^n\int_{0}^{1}\Big (\sum_{k\in\mathbb{Z}}\mathbf{1}_{C+2^n k}(x)\Big )
\mathbf{1}_{A}(x)d {\mu}(x)
\end{equation}
$$
=2^n\int_{0}^{1}\Big (\sum_{k\in\mathbb{Z}}\mathbf{1}_{C+2^n k}(x)
\mathbf{1}_{A+2^n k}(x)\Big )
d {\mu}(x)
$$
$$=
2^n\sum_{m=0}^{2^{-n}-1}\left(\sum_{k\in\mathbb{Z}}\int_{m2^n}^{(m+1)2^n}\mathbf{1}_{C+2^n k}(x)\mathbf{1}_{A+2^n k}(x)d {\mu}(x)\right).
$$
However,
$$\sum_{k\in\mathbb{Z}}\int_{m2^n}^{(m+1)2^n}\mathbf{1}_{C+2^n k}(x)
\mathbf{1}_{A+2^n k}(x)d {\mu}(x)
=\sum_{k\in\mathbb{Z}}\int_{0}^{2^{n}}\mathbf{1}_{C+2^n k}(x)\mathbf{1}_{A+2^n k}(x)d {\mu}(x)$$ $$=
\sum_{k\in\mathbb{Z}} {\mu}(A\cap C\cap [-k2^{n},-(k+1)2^{n}))= {\mu}(A\cap C)=
 {\mu}(A'\cap C).
$$
Hence the left-hand side of \eqref{*sumi} equals $\mu(A'\cap C)$.
Since we have this property for any $A'\in \mathcal{F}_{n}$ we proved
\eqref{*eoc}.  Using this result in
\eqref{*condlim} and taking limit in \eqref{*condlim} we obtain \eqref{*lemmarteq}.
\end{proof}

Let
\begin{equation}\label{*tL}
\widetilde{\Lambda}=\bigcup_{k=1}^\infty (2^{-k} {\mathbb {N}} \cap[k,k+1)).
\end{equation}
 We know from
Example \ref{*exdyada} that $ {{\widetilde {\Lambda}}}$ is type $1$.

\begin{definition}\label{prob p}
Let $0<p<1$.  Then we say that $\Lambda\subset \widetilde{\Lambda}$ is chosen with probability $p$ from $\widetilde{\Lambda}$ if for each $\lambda \in \widetilde{\Lambda}$ the probability that $\lambda \in \Lambda$ is $p$.
That is, we consider $ {\Omega}=\{0, 1 \}^{{\ensuremath {\mathbb N}}}$
with the product measure $  \displaystyle  \P$ which is obtained as the product of the measures which assign probability $p$ to $\{1\}$ and $q=1-p$
to $\{0\}$. We order the elements of $\widetilde{\Lambda}$ in increasing order, that is
$\widetilde{\Lambda}=\{\lambda_{1}<\lambda_{2}<... \}$ and for an element $ {\omega}$ of our
probability space $ {\Omega}$ we assign the random set $ {\Lambda}_{{\omega}}$
which is obtained from $\widetilde{\Lambda}$ by keeping $\lambda_{k}$
if $ {\omega}_{k}$,
 the $k$th entry of $ {\omega}$ is $1$ and deleting it otherwise. To make this a little more precise we consider independent identically distributed random variables $X_{k}( {\omega})$ with values in $\{0, 1 \}$
with $X_{k}( {\omega})= {\omega}_{k}$. Then $\P(X_{k}=1)=p$, $\P(X_{k}=0)=q=1-p$
and we keep $\lambda_{k}$ in $ {\Lambda}_{{\omega}}$ if $X_{k}( {\omega})=1$.

We say that a property holds almost surely if the $\P$ measure  of those $ {\omega}$s for  which $ {\Lambda}_{{\omega}}$ has this property equals $1$.

For ease of notation often we omit the subscript $ {\omega}$ and we just speak about almost sure subsets $ {\Lambda}  {\subset} \widetilde{\Lambda}.$
\end{definition}

It is clear that almost surely if $\Lambda\subset \widetilde{\Lambda}$ is chosen with probability $p$ from $\widetilde{\Lambda}$ then $ {\Lambda}$ is an infinite discrete set.

\begin{theorem}\label{prob p theorem}
Suppose that $0<p<1$ and $\Lambda$ is chosen with probability $p$ from $\widetilde{\Lambda}=\cup_{k=1}^\infty (2^{-k} {\mathbb {N}} \cap[k,k+1))$.  Then almost surely $\Lambda$ is type 1.

\end{theorem}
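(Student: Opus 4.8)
The plan is to exploit the monotonicity $s_{g}(\omega,x)\le \widetilde s_{g}(x)$, which holds for every $\omega$ simply because $\Lambda_{\omega}\subset\widetilde\Lambda$, together with the fact (Example \ref{*exdyada}) that $\widetilde\Lambda$ is type $1$. Since deleting points can only decrease the sum, we always have $C(g,\widetilde\Lambda)\subset C(g,\Lambda_\omega)$, equivalently $D(g,\Lambda_\omega)\subset D(g,\widetilde\Lambda)$. As $\widetilde\Lambda$ is type $1$, the set $D(g,\widetilde\Lambda)$ is either null or conull. In the null case $C(g,\Lambda_\omega)$ is automatically conull and the zero--one alternative holds for free, so everything reduces to showing that, almost surely, whenever $D(g,\widetilde\Lambda)$ is conull the set $D(g,\Lambda_\omega)$ is conull as well. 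By Theorem \ref{*thwit} it suffices to test type $2$ on characteristic functions $g=\mathbf 1_E$ of open sets, which has the pleasant effect that every term $g(x+\lambda)\in\{0,1\}$ is bounded by $1$.

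First I would fix the witness $E$ and a single point $x$. Then $\widetilde s_{g}(x)=\#\{\lambda\in\widetilde\Lambda:x+\lambda\in E\}$, and $s_g(\omega,x)$ counts those candidate indices $\lambda$ that additionally survive the deletion. If $\widetilde s_g(x)=\infty$ there are infinitely many candidate indices, each retained independently with probability $p$; by the second Borel--Cantelli lemma infinitely many are retained almost surely, so $s_g(\omega,x)=\infty$ almost surely. (If $\widetilde s_g(x)<\infty$ then trivially $s_g(\omega,x)<\infty$.) Hence for each fixed $x$ one has $\mathbf 1[s_g(\omega,x)=\infty]=\mathbf 1_{D(g,\widetilde\Lambda)}(x)$ almost surely. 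Establishing joint measurability of $(\omega,x)\mapsto s_g(\omega,x)$ exactly as in the proof of Theorem \ref{*thwit}, and applying Fubini, then yields for each fixed $E$ that almost surely $D(g,\Lambda_\omega)=D(g,\widetilde\Lambda)$ up to a null set, and in particular is null or conull.

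\textbf{The main obstacle} is that the exceptional $\mathbb P$-null set produced above depends on $E$, whereas type $1$ is a statement about \emph{all} witnesses simultaneously; since there are uncountably many candidate sets $E$ one cannot simply intersect over the uncountably many favourable events. To overcome this I would reduce the uncountable family of witnesses to a countable one adapted to the dyadic structure. The key tool is Lemma \ref{*lemmart}: for $g=\mathbf 1_E$ the block count $\#\{\lambda\in\widetilde\Lambda\cap[k,k+1):x+\lambda\in E\}$ equals $\#\big((x+2^{-k}\mathbb Z)\cap E\cap[x+k,x+k+1)\big)$, which for almost every $x$ is asymptotic to $2^{k}\mu(E\cap[x+k,x+k+1))$. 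Thus the divergence of $\widetilde s_g$, and -- after a law-of-large-numbers over the retained points in each block, producing an extra factor $p$ -- the divergence of $s_g$ as well, depends on $E$ only through its block masses $\mu(E\cap[k,k+1))$. Approximating these masses by dyadic rationals and invoking Lemma \ref{*lemmodBKM} to guarantee that a sufficiently small modification of $E$ alters neither the convergence nor the divergence set, one may replace an arbitrary open witness by a member of a fixed countable family $\mathcal G$ of dyadically defined sets. Applying the fixed-witness conclusion of the previous paragraph to each $g\in\mathcal G$ and intersecting the resulting countably many almost sure events produces a single $\omega$ for which $D(g,\Lambda_\omega)$ is null or conull for every $g\in\mathcal G$; since any type $2$ witness may be taken in $\mathcal G$, this forces $\Lambda_\omega$ to be type $1$ almost surely. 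Making the passage to the countable family $\mathcal G$ fully rigorous -- in particular the uniform control of the retained block counts for almost every $x$ -- is the part I expect to require the most care.
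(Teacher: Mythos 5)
Your first two steps are sound: the deterministic inclusion $D(g,\Lambda_\omega)\subset D(g,\widetilde\Lambda)$, and the fixed-witness statement that for each single $g=\mathbf 1_E$ one has, almost surely, $D(g,\Lambda_\omega)=D(g,\widetilde\Lambda)$ modulo a $\mu$-null set (second Borel--Cantelli plus Fubini). The genuine gap is the reduction of the uncountable family of witnesses to a fixed countable family $\mathcal G$, which you rightly flag as the main obstacle but then dispatch with two suggestions, neither of which can be made to work. (i) Approximation via Lemma \ref{*lemmodBKM}: since almost surely $\#(\Lambda_\omega\cap[l,l+1))$ grows like $p\,2^l$, condition \eqref{*modeq} forces the replacement set $E'$ to satisfy $\sum_l 2^l\mu\bigl((E\Delta E')\cap[l,l+1)\bigr)<\infty$, i.e.\ approximation in an exponentially tail-weighted metric. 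No countable family is dense in this metric: the sets $E_\alpha=\bigcup\bigl\{[l,l+\tfrac12):\alpha_l=1\bigr\}$, where the $\alpha\in\{0,1\}^{\mathbb N}$ run over an uncountable family whose members differ pairwise in infinitely many coordinates, are at pairwise infinite distance, so by pigeonhole no countable $\mathcal G$ can approximate them all. (ii) The block-mass route is circular: the claim that a.e.-divergence of $s_g(\omega,\cdot)$ depends on $E$ only through the masses $\mu(E\cap[k,k+1))$ would already say that $D(\mathbf 1_E,\Lambda_\omega)$ is conull when $\sum_k2^k\mu(E\cap[k,k+1))=\infty$ and null otherwise, for every $E$ simultaneously --- via Theorem \ref{*thwit} that is precisely the statement being proved, so nothing has been reduced; and a per-block law of large numbers cannot deliver this claim, since it again produces exceptional $\mathbb P$-null sets depending on $(E,x)$, which is the same quantifier problem you started with.

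There is a structural reason why no repair along these lines is possible: every step you have actually completed (monotonicity; fixed-witness Borel--Cantelli/Fubini; type 1-ness of the ambient set; intersection over a fixed countable $\mathcal G$) applies verbatim to the ambient set $\bigcup_k\bigl(2^{-k}\mathbb N\cap[n_k,n_{k+1})\bigr)$, which is also type $1$ by Example \ref{*exdyada}; yet Theorem \ref{*randt2} (with $m_k=k$ and $n_k$ growing fast) shows that there the randomly thinned set is almost surely type $2$, the witness being adapted to $\omega$ (it exploits the unit intervals emptied by the deletion, via Theorem \ref{*thlacu}). So witnesses can genuinely escape every countable family fixed in advance, and a correct proof must pit the specific quantitative density of $\bigcup_k\bigl(2^{-k}\mathbb N\cap[k,k+1)\bigr)$ against $\omega$-adapted witnesses. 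This is what the paper does, and note that it reverses your order of quantifiers: it first fixes a single witness-independent almost sure event (Lemma \ref{prob p lemma}: for each $L$, eventually every interval $[x,x+2^{-L})$ contains more than $p\,2^{\lfloor x\rfloor-L-2}$ retained points), and then, deterministically on that event, refutes an arbitrary putative characteristic-function witness: using Lebesgue density and Lemma \ref{*lemmart} applied to the convergence set (not to the witness set $E$), together with a periodization and pigeonhole argument, it shows that any point of $S$ reachable from the divergence interval $I_{\pmb D}$ would also be reachable from the nearly full convergence set, forcing $\mu(D(f,\Lambda)\cap I_{\pmb D})=0$ and a contradiction. That deterministic density-plus-pigeonhole argument is the mathematical core of the theorem and has no counterpart in your sketch.
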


\begin{lemma}\label{prob p lemma}
Suppose that $0<p<1$ and $\Lambda$ is chosen with probability $p$ from $\widetilde{\Lambda}$.  Then almost surely $\Lambda$ satisfies the following:

For every $L \in  {\mathbb {N}}$ there exists $N \in  {\mathbb {N}}$ such that for all $x \ge N$ we have
 \begin{equation}\label{p dense}
\#\Big (\Lambda\cap \Big [x,x+\frac{1}{2^L}\Big )\Big  ) > {p}\cdot{2^{J-L-2}},
 \end{equation}
where $J= \lfloor x  \rfloor$.
\end{lemma}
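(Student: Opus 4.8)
The plan is to establish the property for each fixed $L\in\mathbb{N}$ on a full-measure event, and then intersect these events over the countably many values of $L$. The essential difficulty is the quantifier ``for all $x\ge N$'': since $x$ ranges over a continuum, one cannot directly union-bound the failure probability over all $x$. I would circumvent this by a dyadic covering argument that reduces each window $[x,x+2^{-L})$ to one of finitely many fixed subintervals per unit interval, to which a Chernoff bound and the Borel--Cantelli lemma can be applied.

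\textbf{Reduction to dyadic intervals.} Since $[x,x+2^{-L})$ has length $2\cdot 2^{-(L+1)}$, it always contains a full dyadic interval $Q=[i2^{-(L+1)},(i+1)2^{-(L+1)})$. Hence $\#(\Lambda\cap[x,x+2^{-L}))\ge\#(\Lambda\cap Q)$, and it suffices to bound the count of $\Lambda$ from below over these fixed intervals. Writing $J=\lfloor x\rfloor$, for $J\ge L+2$ such a $Q$ lies in $[J,J+1)$ or in $[J+1,J+2)$. The points of $\widetilde{\Lambda}$ in $[J,J+1)$ are exactly the multiples of $2^{-J}$, and the endpoints of $Q$ are multiples of $2^{-(L+1)}$, hence of $2^{-J}$; therefore $Q$ contains precisely $2^{J-L-1}$ points of $\widetilde{\Lambda}$ when $Q\subset[J,J+1)$ and $2^{J-L}$ points when $Q\subset[J+1,J+2)$. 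In either case $\#(\widetilde{\Lambda}\cap Q)\ge 2^{J-L-1}$.

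\textbf{Concentration and Borel--Cantelli.} For such a $Q$, the count $\#(\Lambda\cap Q)$ is a sum of $n_Q\ge 2^{J-L-1}$ independent Bernoulli($p$) variables with mean $pn_Q$, so the multiplicative Chernoff lower-tail bound gives $\mathbb{P}(\#(\Lambda\cap Q)\le\tfrac{p}{2}n_Q)\le e^{-pn_Q/8}\le e^{-p2^{J-L-1}/8}$; on the complementary event $\#(\Lambda\cap Q)>\tfrac{p}{2}n_Q\ge p\cdot2^{J-L-2}$, which is exactly the target threshold. Let $B_J$ be the event that at least one of the $2^{L+1}$ dyadic intervals of length $2^{-(L+1)}$ contained in $[J,J+1)$ has at most $\tfrac{p}{2}n_Q$ surviving points; a union bound yields $\mathbb{P}(B_J)\le 2^{L+1}e^{-p2^{J-L-1}/8}$. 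Since $\sum_{J\ge L+2}2^{L+1}e^{-p2^{J-L-1}/8}<\infty$, the first Borel--Cantelli lemma shows that almost surely only finitely many $B_J$ occur.

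\textbf{Assembly.} Choose $J_0\ge L+2$ exceeding all indices $J$ for which $B_J$ occurs, and set $N=J_0$. For any $x\ge N$ with $J=\lfloor x\rfloor$, the window $[x,x+2^{-L})$ contains a dyadic interval $Q$ of length $2^{-(L+1)}$ lying in $[J,J+1)$ or $[J+1,J+2)$; since neither $B_J$ nor $B_{J+1}$ holds, in both cases $\#(\Lambda\cap[x,x+2^{-L}))\ge\#(\Lambda\cap Q)>p\cdot2^{J-L-2}$, which is precisely \eqref{p dense}. Intersecting over all $L\in\mathbb{N}$ produces a single full-measure event on which the asserted property holds for every $L$. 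The main obstacle is exactly the passage from a single $x$ to all $x$ simultaneously; the dyadic refinement by one level (which turns the target into ``half the mean'' and produces precisely the factor $2^{J-L-2}$) is what makes the Chernoff--Borel--Cantelli scheme close, the remaining ingredients being the standard binomial lower-tail estimate and a convergent geometric-type sum.
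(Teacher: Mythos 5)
Your proof is correct and follows essentially the same strategy as the paper's: reduce the uncountable family of windows $[x,x+2^{-L})$ to dyadically aligned subintervals of the unit intervals $[J,J+1)$, apply a concentration inequality to the binomial counts on each such interval, union bound over the finitely many intervals per $J$, and conclude via a convergent series and Borel--Cantelli, finally intersecting over countably many $L$. The only differences are technical choices -- you use one half-length dyadic interval per window and a Chernoff bound, whereas the paper uses many short blocks of length $2^{-L-100}$ and a Chebyshev-type weak-law estimate -- and both suffice because the summands decay at least geometrically in $J$.
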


\begin{proof}
We will use the notation from Definition \ref{prob p}.
We consider $ {\Lambda}= {\Lambda}_{{\omega}}$ obtained from $\widetilde{\Lambda}$
by using the i.i.d. random variables $X_{k}=X_{k}( {\omega})= {\omega}_{k}$.

We recall from the standard Chebyshev’s
inequality proof of the Weak Law of Large Numbers (see for example
\cite[Ch. 2.2]{D}) that there exists a constant $C_{p}$
depending only on $p$ such that for any $K,n\in  {\ensuremath {\mathbb N}}$
\begin{equation}\label{*wlaw}
\P\Big \{\Big |\frac{X_{K+1}+...+X_{K+n}}{n}-p\Big |>p(1-2^{-1.9}) \Big \}<\frac{C_{p}}{n}.
\end{equation}

Observe that  $ {{\widetilde {\Lambda}}}\cap [J,J+1)=\{\lambda_{2^{J}-1}, \lambda_{2^{J}-1+1},...,
\lambda_{2^{J}-1+2^{J}-1}\} $ for any $J\in  {\ensuremath {\mathbb N}}$.

We say that ${{\omega}}$ is {\it $J$-$L'$-good}
if
\begin{equation}\label{*wlaw2}
\Big |\frac{\sum_{j=0}^{2^{J-L'}-1} X_{2^{J}-1+l\cdot 2^{J-L'}+j}( {\omega})}{2^{J-L'}}-p\Big |\leq p(1-2^{-1.9})
\end{equation}
$$\text{  holds for every $l=0,...,2^{L'  }-1$}.$$
If ${{\omega}}$ is $J$-$L'$-good for every $J\geq J_{0}$ then we say that
 $ {\omega}$ is {\it $J_{0}$-$L'$-$ {\infty}$-good}.

By \eqref{*wlaw} one can see that
\begin{equation}\label{*wlaw3}
\P\{ {\omega}:  {\omega} \text{  is $J$-$L'$-good} \}\geq 1- 2^{L'}\cdot \frac{C_{p}}{2^{J-L'}}
\end{equation}
and hence
\begin{equation}\label{*wlaw4}
\P\{ {\omega}:  {\omega} \text{  is $J_{0  }$-$L'$-$ {\infty}$-good} \}\geq 1- \sum_{J\geq J_{0}} 2^{L'}\cdot \frac{C_{p}}{2^{J-L'}}.
\end{equation}

It is easy to see that if  $L'$ is sufficiently large, say
$L'=L+100$ and $ {\Lambda}= {\Lambda}_{{\omega}}$ for a $J_{0}$-$L'$-$ {\infty}$-good $ {\omega}$,
then \eqref{p dense} holds with $N=J_{0}$.

Using \eqref{*wlaw4} it is also clear that for $\P$
a.e. $ {\omega}$ there is a $J_{0}$ such that $ {\omega}$ is $J_{0}$-$L'$-$ {\infty}$-good.
This completes the proof of the lemma.
\end{proof}

\bigskip

\begin{proof}[Proof of Theorem \ref{prob p theorem}]

Let $0 < p <1$ and assume that $\Lambda$ has been chosen with probability $p$ from $\widetilde{\Lambda}$.

Pursuing a contradiction, we assume that $\Lambda$ is type 2.

By Theorem \ref{*thwit} we can choose a measurable set $S \subset  {\mathbb R}$ such that $f=\mathbf{1}_S$ witnesses that $\Lambda $ is type 2.  Thus $\mu(D(f,\Lambda))>0$ and $\mu(C(f,\Lambda))> 0$ and therefore we can choose $R\in  {\mathbb {N}}$ and an interval $I$ of length $R-1$
 such that $\mu(D(f,\Lambda)\cap I)>0$ and $\mu(C(f,\Lambda)\cap I)>0$.  
 Then using the Lebesgue Density Theorem we choose intervals $I_{\pmb{D}}$ and $I_{\pmb{C}}$ subsets of $I$ of length $2^{-L}$ where $L \in  {\mathbb {N}}$ such that
 \begin{equation}\label{IC big}
\mu(I_{\pmb{C}} \cap C(f,\Lambda)) > \Big (1-\frac{p}{2^{R+7}}\Big )\cdot 2^{-L}
 \end{equation}
and
 \begin{equation}\label{ID big}
\mu(I_{\pmb{D}} \cap D(f,\Lambda))> 0.
 \end{equation}
We assume without loss of generality that $I_{\pmb{C}}=[0,\frac1{2^L})$ and $I_{\pmb{D}}=[\frac{-N}{2^L},\frac{-(N-1)}{2^L})$ for some $N \in  {\ensuremath {\mathbb Z}}$.
Since the cases $N\leq 0$ are easier than the ones when $N>0$
we provide details only for the case $N\in  {\ensuremath {\mathbb N}}$.

  Note that we have
  \begin{equation}\label{*NRL}
  N \le R \cdot 2^L.
  \end{equation}

For each $n\in  {\mathbb {N}}$ we define $C^*_n=\{x \in C(f,\Lambda)\cap I_{\pmb{C}} : \, (x+\Lambda) \cap [n,\infty) \cap S =\emptyset\}$.  Since $f$ is a characteristic function, it follows that $\cup_{n=1}^\infty C^*_n=C(f,\Lambda)\cap I_{\pmb{C}}$ and therefore we can choose $C \subset C(f,\Lambda)\cap I_{\pmb{C}}$ and $M \in  {\mathbb {N}}$ such that
 \begin{equation}\label{C big}
 \mu(C) \ge \Big (1-\frac{p}{2^{R+6}}\Big )\cdot 2^{-L}
  \end{equation}
and
 \begin{equation}\label{C intersect empty}
(C +\Lambda)\cap [M\cdot 2^{-L},\infty)\cap S=\emptyset.
 \end{equation}
For each $n \ge L$ define $n^*= \lfloor \frac{n}{2^L}  \rfloor$ and let
$$ C_n =\Big \{x \in I_{\pmb{C}}: \, \#((x+2^{-k} {\mathbb {Z}})\cap C)>\Big (1-\frac{p}{2^{R+5}}\Big )2^{k-L} \text{    for all    }k \ge (n+N)^* \Big \}$$
and $E_n=C_n-\frac{N}{2^L} {\subset}  I_{\pmb{D}}$.
Note that for all $n \ge L$ we have $C_n \subset C_{n+1} \subset I_{\pmb{C}}$ and by a rescaled version of Lemma \ref{*lemmart} we know that
\begin{equation}\label{*IDEn}
\text{  $\mu(C_n) \to 2^{-L  }= {\mu}(I_{\pmb{C}})$ and hence $ {\mu}( I_{\pmb{D}} {\setminus} E_{n})\to 0.$}
\end{equation}
Note also that $C_n$ is $\frac1{2^{(n+N)^*}}$ periodic on $I_{\pmb{C}}$ and $E_n$ is $\frac{1}{2^{(n+N)^*}}$ periodic on $I_{\pmb{D}}$ for all $n \ge L$.

For each $n \in  {\mathbb {N}}$ define
$$ S_n=\Big \{y \in \Big [\frac{n} {2^{L}},\frac{n+1} {2^{L}}\Big ): \, (y-\Lambda)\cap C_{n} =\emptyset\Big \},$$
and
$$S_n^\prime=\Big \{y \in \Big [\frac{n} {2^{L}},\frac{n+1} {2^{L}}\Big ): \, (y-\Lambda)\cap C =\emptyset \Big \}.$$

Using Lemma \ref{prob p lemma}, we may assume that we can choose $P \in  {\mathbb {N}}$ such that
 \begin{equation}\label{p dense lambda}
\#\Big (\Lambda\cap \Big [x,x+\frac{1}{2^L}\Big )\Big ) > {p}\cdot{2^{J-L-2}} \text{    for all    }x\ge P,
 \end{equation}
where $J= \lfloor x \rfloor$.

Next we show that
\begin{equation}\label{*CA45}
\text{  if $n >B:= \max \{M,(P+L )\cdot 2^L+1 \}$, then $S \cap \Big
[\frac{n}{2^L},\frac{n+1}{2^L}\Big )\subset S_n^\prime \subset S_n$.
}
\end{equation}
Assume that $n > B$.  Since $n > M$,
by \eqref{C intersect empty} we have $(C + \Lambda) \cap S \cap [\frac{n}{2^L},\frac{n+1}{2^L}) =\emptyset$ and therefore
$S \cap [\frac{n}{2^L},\frac{n+1}{2^L}) \subset S^\prime_{n}$.

Now suppose that $y \in  [\frac{n}{2^L},\frac{n+1}{2^L}) {\setminus} S_n$.  Then we can choose $x \in C_n$ and $\lambda \in \Lambda$ such that $x+\lambda=y$.
Since $\lambda=y-x<\frac{n+1}{2^{L}}\leq n^{*}+1$
by \eqref{*tL} and $ {\Lambda} {\subset}  {{\widetilde {\Lambda}}}$
 we have $\lambda\in 2^{-n^{*}} {\ensuremath {\mathbb Z}}$ which implies
\begin{equation}\label{*xylll}
\text{  $y+2^{-n^{*  }} {\ensuremath {\mathbb Z}}=x+2^{-n^{*}} {\ensuremath {\mathbb Z}}$ and $(y- {\Lambda})\cap \Big [0,\frac{1}{2^L}\Big ) {\subset}
(y+2^{-n^{*}} {\ensuremath {\mathbb Z}})\cap \Big [0,\frac{1}{2^L}\Big )$.}
\end{equation}

From the definition of $C_n$ we have that
$$\#((x+2^{-(n+N)^*} {\mathbb {Z}}) \cap C) > \Big (1-\frac{p}{2^{R+5}}\Big )\cdot 2^{(n+N)^*-L},\text{  that is  }$$
$$\frac{p}{2^{R+5}}\cdot 2^{(n+N)^*-L}>\#\Big (\Big ((x+2^{-(n+N)^*} {\mathbb {Z}}) \cap \Big  [0,\frac{1}{2^L}\Big )\Big ) {\setminus} C\Big )  $$
$$\geq \#\Big (\Big ((x+2^{-n^*} {\mathbb {Z}}) \cap \Big  [0,\frac{1}{2^L}\Big )\Big ) {\setminus} C\Big ).$$
It follows that
$$\#((x+2^{-n^*} {\mathbb {Z}})\cap C) > 2^{{n^*}-L}-\frac{p}{2^{R+5}}(2^{(n+N)^*-L})=2^{n^*-L}
\Big (1-\frac{p}{2^{R+5}}2^{(n+N)^*-n^*}\Big ).$$
Using  $(n+N)^*-n^*\le N^*+1$ and by \eqref{*NRL}, $R \ge N^*$, we conclude that
 \begin{equation}\label{1-pover2}
\#((y+2^{-n^*} {\mathbb {Z}})\cap C)=\#((x+2^{-n^*} {\mathbb {Z}})\cap C) > \Big (1-\frac{p}8 \Big )\cdot 2^{n^*-L}.
 \end{equation}
Now using \eqref{p dense lambda} with $y-\frac1{2^L}$ in place of $x$ we find that since $y-\frac1{2^L}\geq \frac{n-1}{2^{L}}\geq P$
$$\#\Big ((y-\Lambda) \cap \Big [0,\frac1{2^L}\Big )\Big )=
\#\Big (\Lambda \cap \Big [y-\frac1{2^L},y\Big )\Big )>p\cdot 2^{n^*-1-L-2}=\frac{p}8\cdot 2^{n^*-L}.$$
Using \eqref{*xylll}, \eqref{1-pover2},
the last inequality and the pigeon-hole
principle, we conclude that there
must exist $x^\prime \in C$ and $\lambda^\prime \in \Lambda$ such that $x^\prime+\lambda^\prime=y$ and therefore $y \notin S_n^\prime$.   It follows that $S_n^\prime \subset S_n$ and we are done with the proof of \eqref{*CA45}.

Next we continue with some definitions.
For each $n \in  {\mathbb {N}}$ we define $D_n=(S_n-\Lambda)\cap I_{\pmb{D}}$ and let $D_n^\prime=D_n \cap E_n$ and $D_n^{\prime\prime}=D_n \backslash D_n^\prime$.  Note that if $x \in I_{\pmb{D}} \backslash D_n$ and $n> B$, then
$(x+\Lambda)\cap S_n^\prime =\emptyset$ so $f(x+\lambda)=0$ for all $\lambda \in [\frac{n+N}{2^L}-x,\frac{n+N+1}{2^L}-x)$.
From these considerations it follows that
$$ D(f,\Lambda) \cap I_{\pmb{D}} \subset \cap_{k=1}^\infty \cup_{n=k}^\infty D_n =
(\cap_{k=1}^\infty \cup_{n=k}^\infty D_n^\prime)\cup (\cap_{k=1}^\infty \cup_{n=k}^\infty D_n^{\prime\prime}).
$$
Furthermore, for all $n \in  {\mathbb {N}}$ we have $D_n^{\prime\prime}\subset I_{\pmb{D}}\backslash E_n$ where $I_{\pmb{D}}\backslash E_{n+1} \subset I_{\pmb{D}}\backslash E_n$ and
by \eqref{*IDEn},
 $\mu(I_{\pmb{D}}\backslash E_n)\to 0$ and therefore
 \begin{equation}\label{Tnprime}
\mu(\cap_{k=1}^\infty \cup_{n=k}^\infty D_n^{\prime\prime})=0.
 \end{equation}

Thus, if we can prove that $\mu(\cap_{k=1}^\infty \cup_{n=k}^\infty D_n^\prime)=0$ we can conclude that
$\mu(D(f,\Lambda) \cap I_{\pmb{D}})=0$, which contradicts (\ref{ID big}) and finishes the proof of the theorem.
Actually we prove that  $D_{n}'= {\emptyset}$ for large $n$.

Suppose that $n > B $. Then $n^*\geq L$ and $\lambda''=\frac{n}{2^L}\in 2^{-n^*} {\ensuremath {\mathbb Z}}.$
We show that
\begin{equation}\label{*CE45}
\Big (C_{n}+\frac{n}{2^L}\Big )\cap S_{n}=(C_{n}+\lambda'')\cap S_{n}= {\emptyset}.
\end{equation}
Indeed, suppose that $y=x+\lambda'' $ with $x\in C_{n}$ then we show that one can find
$x'\in C_{n}$ and $\lambda'\in {\Lambda}$ such that $y=x'+\lambda'$
and hence $y\not \in S_{n}$. This follows easily, since
$C_{n}$ is $\frac1{2^{n^*}}$ periodic on $I_{\pmb{C}}$ and
one can apply (\ref{p dense lambda}) for $x''=x+\frac{n-1}{2^L}$ and
observe that there are points of $ {\Lambda}$ in $ (x+\frac{n-1}{2^L},x+\frac{n}{2^L})$.
Select such a point $\lambda'.$ Then $\lambda''-\lambda'\in 2^{-n^*} {\ensuremath {\mathbb Z}}$
and hence if we let $x'=x+(\lambda''-\lambda')$ then $y=x'+\lambda'$
and $x'\in [0,\frac{1}{2^L})$. Since $C_{n}$ is $2^{-n^*}$ periodic
in $[0,\frac{1}{2^L})$ we obtained that $x'\in C_{n}$, proving
\eqref{*CE45}.

 Now recall that  $C_n$ is $\frac1{2^{(n+N)^*}}$ periodic on $I_{\pmb{C}}$. This implies that $C_n+\lambda''=C_{n}+\frac{n}{2^L}$ is $\frac1{2^{(n+N)^*}}$ periodic on $[\frac{n} {2^{L}},\frac{n+1} {2^{L}})$.
 By \eqref{*CE45} $$S_{n} {\subset}  {{\widetilde {C}}}_{n}:=\Big [\frac{n} {2^{L}},\frac{n+1} {2^{L}}\Big ) {\setminus} \Big (C_{n}+\frac{n}{2^L}\Big ).$$
Obviously $ {{\widetilde {C}}}_{n}$ is also $\frac1{2^{(n+N)^*}}$ periodic on $[\frac{n} {2^{L}},\frac{n+1} {2^{L}})$.  Since we also know that $E_n=C_n-\frac{N}{2^L}$ is $\frac1{2^{(n+N)^*}}$ periodic on 
$[\frac{-N}{2^L},\frac{-(N-1)}{2^L})$, it follows that 
\begin{equation}\label{*tCn}
( {{\widetilde {C}}}_{n}-2^{-(n+N)^*} {\ensuremath {\mathbb N}})\cap E_{n}= {\emptyset}.
\end{equation}
Now observe that if $y\in \widetilde{C}_n$ and $y-\lambda \in E_n$, then we have $\lambda \in [\frac{N+n-1}{2^L},\frac{N+n+1}{2^L})$.  Moreover, we also have that $\lambda \in [\frac{N+n-1}{2^L},\frac{N+n+1}{2^L}) \cap \Lambda$ implies that $\lambda \in 2^{-(n+N)^*} {\ensuremath {\mathbb N}}$.  
Since $S_n \subset \widetilde{C}_n$, it follows from (\ref{*tCn}) that $(S_n-\Lambda)\cap E_{n}= {\emptyset}$, which implies that
 $D_{n}'= {\emptyset}$ for $n> B.$
 This concludes the proof of Theorem \ref{prob p theorem}.
  \end{proof}

\begin{theorem}\label{*randt2} Suppose that $(m_k)$ and $(n_k)$ are strictly increasing sequences of
positive integers. For each $k\in\mathbb{N}$, define $\Lambda_k=2^{-m_k} \mathbb{N} \cap [n_k, n_{k+1})$ and let $ {{\widetilde {\Lambda}}}=\bigcup_{k=1}^{\infty}\Lambda_k$. Moreover, fix $0<p<1$ and suppose that $\Lambda$ is chosen  with probability $p$ from $ {{\widetilde {\Lambda}}}$.
Set $q=1-p$.
 For fixed $(m_k)$, if $(n_k)$ tends to infinity sufficiently fast
 then  almost surely $ {\Lambda}$ is type $2$. Notably, if the series $\sum_{k=1}^{\infty}1-\left(1-{q}^{2^{m_k}}\right)^{n_{k+1}-n_k}$ diverges then  almost surely $ {\Lambda}$ is type $2$. \end{theorem}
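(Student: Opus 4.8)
The plan is to show that, under the divergence hypothesis, almost surely $\Lambda$ is \emph{asymptotically lacunary}, and then to invoke Theorem~\ref{*thlacu}, which asserts that every asymptotically lacunary set is type~$2$. This also yields the first assertion: for fixed $(m_k)$ the quantity $q^{2^{m_k}}$ is a fixed number in $(0,1)$, so if $n_{k+1}-n_k\to\infty$ fast enough, each term $1-(1-q^{2^{m_k}})^{n_{k+1}-n_k}$ tends to $1$ and the series diverges.

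First I would isolate the right elementary event. Within the $k$-th block $[n_k,n_{k+1})$ the points of $\widetilde{\Lambda}$ sit on the lattice $2^{-m_k}\mathbb{N}$, so for each integer $j$ with $n_k\le j<n_{k+1}$ the unit interval $[j,j+1)$ contains exactly the $2^{m_k}$ points $j+i\,2^{-m_k}$, $0\le i<2^{m_k}$. The probability that all of them are deleted, i.e. that $\Lambda\cap[j,j+1)=\emptyset$, equals $q^{2^{m_k}}$, and these events are independent over the $n_{k+1}-n_k$ disjoint unit subintervals of the block. Hence the probability that the $k$-th block contains at least one empty unit interval is
$$
A_k:=1-\bigl(1-q^{2^{m_k}}\bigr)^{n_{k+1}-n_k},
$$
which is exactly the $k$-th term of the series in the statement.

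Next I would apply the second Borel--Cantelli lemma to the events $B_k$ that $\Lambda$ has an empty unit interval inside $[n_k,n_{k+1})$. Because distinct blocks involve disjoint collections of the underlying independent deletion variables, the $B_k$ are independent; since $\sum_k\P(B_k)=\sum_k A_k=\infty$ by hypothesis, almost surely infinitely many $B_k$ occur. Thus almost surely there are arbitrarily large integers $j$ with $\Lambda\cap[j,j+1)=\emptyset$, and taking $a=1/2$ gives $\#(\Lambda\cap[j+1/4,\,j+3/4])=0$ for infinitely many $j\to\infty$. Therefore $\#(\Lambda\cap[x,x+a])$ does not tend to $\infty$, so $\Lambda$ is not asymptotically dense; almost surely $\Lambda$ is asymptotically lacunary and Theorem~\ref{*thlacu} completes the argument.

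I do not anticipate a deep obstacle. The only points demanding care are the exact binomial form of $A_k$ (one must count unit intervals, not individual lattice points, and use that a single empty unit interval already forces a gap of length $\ge 1$) and the verification that the block events $B_k$ are genuinely independent, so that it is the divergence of $\sum_k A_k$---rather than of $\sum_k$ of the single-interval probabilities $q^{2^{m_k}}$---that drives the second Borel--Cantelli lemma.
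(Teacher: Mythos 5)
Your proposal is correct and follows essentially the same route as the paper: both define the block events that some unit interval $[a,a+1)\subset[n_k,n_{k+1})$ misses $\Lambda$, compute their probabilities as exactly the terms of the given series, apply the second Borel--Cantelli lemma using independence across blocks, and conclude asymptotic lacunarity so that Theorem~\ref{*thlacu} finishes the argument. Your added care about counting unit intervals (rather than lattice points) and about block independence matches the implicit content of the paper's shorter computation.
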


 \begin{remark}
 If $m_{k}=k$ then by Example \ref{*exdyada},
$\Lambda$ is type $1$ for any $n_{k}$ and hence
it may happen that a type $1$ set is turned into a type $2$
set by random deletion of its elements.
 \end{remark}
 
\begin{proof}
Let $A_k$ denote the event in which there exists
$a\in {\ensuremath {\mathbb N}}$ such that
$[a,a+1) {\subset} [n_k,n_{k+1})$ and $[a,a+1)\cap {\Lambda}=  {\emptyset}$.
 We can quickly deduce that the probability of the complement is
$$
\mathbb{P}(A_k^c)=\left(1-{q}^{2^{m_k}}\right)^{n_{k+1}-n_k}.
$$
Consequently,
$$
\mathbb{P}(A_k)=1-\left(1-{q}^{2^{m_k}}\right)^{n_{k+1}-n_k}.
$$
By assumption, the series of these probabilites diverges. Consider now the sequence of events $(A_k)_{k=1}^{\infty}$. They are clearly independent, hence by the second Borel--Cantelli lemma the aforementioned divergence implies that almost surely infinitely many of the events $A_k$ occurs.
 However, this immediately yields that
almost surely the set
$ {\Lambda}$ is asymptotically
lacunary  and hence by Theorem \ref{*thlacu}
 it is type $2$.
\end{proof}

\section{$\pmb{c}$-type 1 and 2 sets}
\label{*secpmbc}

 The following theorem is a  nice consequence of Theorem \ref{*thwit}.

 \begin{theorem}\label{*thpmbc1}
 If a set $\Lambda$ is $\pmb{1}$-type $2$, then it is $\pmb{c}$-type $2$ with respect to any positive sequence $\pmb{c}=(c_n)_{n=1}^{\infty}$. \end{theorem}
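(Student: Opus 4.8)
The plan is to exploit, exactly as the discussion preceding Definition \ref{*defchi} suggests, that a $\pmb{1}$-type $2$ set admits a witness function which is a characteristic function. So first I would invoke Theorem \ref{*thwit} to fix a set $S$ such that $f = \mathbf{1}_S$ witnesses that $\Lambda$ is $\pmb{1}$-type $2$; thus both $C(f,\Lambda)$ and $D(f,\Lambda)$ have positive measure. Because these sets have positive measure, I can fix a $K \in \mathbb{N}$ together with bounded subsets $C \subset C(f,\Lambda)$ and $D \subset D(f,\Lambda)$, each of positive measure and each contained in $[-K,K]$. The whole point of having a characteristic function is that for every $x$ the terms of the unweighted series lie in $\{0,1\}$: writing $N(x) = \{n : x+\lambda_n \in S\}$, we have $s_f(x) = \#N(x)$, so $N(x)$ is finite for $x \in C$ and infinite for $x \in D$.

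The construction of the new witness $g$ is then dictated by two requirements. To keep convergence on $C$ I would insist that $g$ be supported on $S$: for $x \in C$ the series $s_{\pmb{c},g}(x) = \sum_n c_n g(x+\lambda_n) = \sum_{n \in N(x)} c_n g(x+\lambda_n)$ is then a finite sum of finite terms, so $C \subset C_{\pmb{c}}(g,\Lambda)$ automatically and the convergence set has positive measure. To force divergence on $D$ I would make $g$ large enough on $S$ to swallow the weights, setting
\[
g(y) = \mathbf{1}_S(y) \cdot \max\Big(\{1\} \cup \{\, 1/c_n : \lambda_n \in [y-K,\, y+K]\,\}\Big).
\]
For $x \in D$ and $n \in N(x)$, put $y = x+\lambda_n \in S$; since $x \in [-K,K]$ we have $\lambda_n = y-x \in [y-K,\,y+K]$, so $1/c_n$ occurs in the maximum defining $g(y)$ and hence $c_n g(x+\lambda_n) \ge 1$. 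As $N(x)$ is infinite this yields $s_{\pmb{c},g}(x) = \infty$, i.e. $D \subset D_{\pmb{c}}(g,\Lambda)$, so the divergence set has positive measure as well. These two inclusions together show that $g$ witnesses that $\Lambda$ is $\pmb{c}$-type $2$.

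The genuine content to verify is that $g$ is a legitimate nonnegative measurable function. Finiteness is immediate: since $\Lambda$ is discrete, $\{n : \lambda_n \in [y-K,\,y+K]\}$ is finite for each $y$, so the maximum is over a finite set and $g(y) < \infty$ pointwise. Measurability follows because $y \mapsto \max(\{1\} \cup \{1/c_n : \lambda_n \in [y-K,\,y+K]\})$ is a step function, changing value only as $y \pm K$ crosses some $\lambda_n$, and it is multiplied by the measurable $\mathbf{1}_S$. I expect the only real subtlety, and the step I would write out most carefully, to be the interval bookkeeping guaranteeing that the \emph{correct} index $n$ (the one with $x+\lambda_n = y$) is among those competing in the maximum; this is precisely where the boundedness of $C$ and $D$ inside $[-K,K]$ is used. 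Finally, $g$ is typically unbounded, and it must be: as noted before Definition \ref{*defchi}, when $\sum_n c_n$ converges no bounded witness can produce divergence, so passing to an unbounded $g$ is unavoidable rather than a defect of the construction.
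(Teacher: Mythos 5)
Your proof is correct and follows essentially the same route as the paper: both invoke Theorem~\ref{*thwit} to obtain a characteristic-function witness, keep $g$ supported on $S$ so that convergence on $C$ is automatic (only finitely many translates of $x\in C$ land in $S$), and inflate the values of $g$ on $S$ so as to dominate $1/c_n$ for every relevant index, using the boundedness of $D$ and the discreteness of $\Lambda$ to keep the inflation factor finite at each point. The only cosmetic difference is that the paper fixes the scaling factor $\alpha_k$ on each connected component $I_k$ of the open support, whereas you define it pointwise via the local maximum over $\lambda_n\in[y-K,y+K]$.
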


\begin{proof} By Theorem \ref{*thwit}, choose an open set such that its characteristic function $f$ witnesses that $\Lambda$ is
$\pmb{1}$-type $2$.
 Then both $D(f,\Lambda)$ and $C(f,\Lambda)$ have positive measure. Choose a bounded $D\subset D(f,\Lambda)$ and a bounded $C\subset C(f,\Lambda)$ of positive measure. Then the set $\{f\neq{0}\}$ equals a countable union of intervals $I_1,I_2,...$. We will construct $g$ verifying the statement such that for any $x\in{I_k}$, $k=1,2,...$ we have $g(x)=\alpha_k f(x)$ for some $\alpha_k>0$. We define $\alpha_k$ as follows: since $D$ is bounded, for any $k=1,2,...$ there are finitely many $\lambda_{k_1},...,\lambda_{k_m}$ such that $x+\lambda_{k_i}\in{I_k}$ for some $x\in{D}$ and $i=1,...,m$. As $c_n>0$ for each $n$, we have that the finite set $\{c_{k_1},...,c_{k_m}\}$ is bounded away from 0. Thus $\alpha_k$ can be chosen sufficiently large to guarantee $\alpha_k c_{k_i}\geq{1}$ for $i=1,...,m$. By this choice for any $x\in{D}$ we have that
$$
\sum_{\lambda_{j}\text{  :  }x+\lambda_{j}\in I_k}c_j g(x+\lambda_{j})\geq \sum_{\lambda_{j}\text{  :  }x+\lambda_{j}\in I_k}f(x+\lambda_{j}).
$$
However, if we add these latter sums for all the intervals $I_k$, we find that our sum diverges. As a consequence, $\sum_{n=1}^{\infty}c_n g(x+\lambda_n)$ diverges for any $x\in{D}$, which guarantees the positive measure of the divergence set.

Concerning the convergence set, we have an easy task: for any $x\in{C}$ we have that $x+\lambda\in \{f\neq{0}\}$ only for finitely many $\lambda$s since otherwise $\sum_{n=1}^{\infty} f(x+\lambda_n)$ would diverge as $\{f\neq{0}\}=\{f={1}\}$. Thus we also have $x+\lambda\in \{g\neq{0}\}$ only for finitely many $\lambda$s. This guarantees that $\sum_{n=1}^{\infty}c_n g(x+\lambda_n)$ converges for any $x\in{C}$, which guarantees the positive measure of the convergence set. \end{proof}

The previous theorem displays that the sequence $c_n\equiv 1$ is minimal in some sense: the family of type $2$ sets is as small as possible. It is natural to ask whether all $\chi$-sequences have this property.

Theorem \ref{*univt2seq} shows that not all sequences have this property by showing the other extreme: sequences for which every $ {\Lambda}$ is $\pmb{c}$-type  $2$.

\begin{theorem}\label{*univt2seq}
Suppose that $\pmb{c}=(c_n)$ is a sequence of positive numbers satisfying the following condition:
 \begin{equation}\label{cn fast decr}
\sum_{j=n+1}^\infty c_j < 2^{-n}c_n \text{    for every    } n \in  {\mathbb {N}}.
\end{equation}
Then every discrete set $\Lambda$ is $\pmb{c}$-type  $2$.
\end{theorem}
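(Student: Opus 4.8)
The plan is to exploit the super-exponential decay of $\pmb{c}$ directly: condition \eqref{cn fast decr} forces $\sum_{n}c_n<\infty$ (already $\sum_{j\geq 1}c_j<\tfrac32 c_1$), so any \emph{bounded} witness would make $s_{\pmb{c},f}$ converge everywhere. Hence the witness $f$ must be unbounded, and I would construct it explicitly as a sum of disjoint ``bumps'' placed along a sparse subsequence of $\Lambda$, with heights tuned to $1/c_{n_k}$ so that one block of $x$-values sees a divergent series while a neighboring block sees a convergent one.

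First I would use that $\Lambda=\{\lambda_1<\lambda_2<\cdots\}$ is infinite and unbounded to extract a subsequence $(\lambda_{n_k})$ that is $1$-separated, i.e. $\lambda_{n_{k+1}}>\lambda_{n_k}+1$; this guarantees that the intervals $B_k:=[\lambda_{n_k}+1,\lambda_{n_k}+2)$ are pairwise disjoint. I then define
\[
f=\sum_{k=1}^{\infty}\frac{1}{c_{n_k}}\,\mathbf{1}_{B_k},
\]
a nonnegative measurable function, and set $D=[1,2)$ and $C=[0,1)$. For $x\in D$ one has $x+\lambda_{n_k}\in B_k$ for every $k$, so $c_{n_k}f(x+\lambda_{n_k})=1$ for infinitely many $k$ and $s_{\pmb{c},f}(x)=\infty$; thus $D\subset D_{\pmb{c}}(f,\Lambda)$, which has positive measure.

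The heart of the argument, and the step where \eqref{cn fast decr} is indispensable, is showing $C\subset C_{\pmb{c}}(f,\Lambda)$. For $x\in C=[0,1)$ I would observe that a term $c_nf(x+\lambda_n)$ is nonzero only when $x+\lambda_n\in B_k$ for a unique $k$ (uniqueness by disjointness), which forces $\lambda_n>\lambda_{n_k}$ and hence $n>n_k$. Grouping the nonzero terms by the bump $k$ they hit, the contribution of bump $k$ is $\frac{1}{c_{n_k}}\sum c_n$, the sum running over finitely many indices all exceeding $n_k$; by \eqref{cn fast decr} this is at most $\frac{1}{c_{n_k}}\sum_{j>n_k}c_j<2^{-n_k}$. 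Summing over $k$ and using $n_k\geq k$ gives $s_{\pmb{c},f}(x)<\sum_{k}2^{-k}<\infty$ for every $x\in C$. Hence both $C_{\pmb{c}}(f,\Lambda)$ and $D_{\pmb{c}}(f,\Lambda)$ have positive measure, so $\Lambda$ is $\pmb{c}$-type $2$.

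The only genuine obstacle is that many points $\lambda_n$ may cluster just past some $\lambda_{n_k}$ (precisely when $\Lambda$ is asymptotically dense), producing an arbitrarily large finite cluster of nonzero terms inside a single bump. The tail estimate \eqref{cn fast decr} is exactly the mechanism that absorbs such a cluster, since it bounds the \emph{entire} tail $\sum_{j>n_k}c_j$ rather than individual terms. I would also note that the placement is what makes the two sides separate: putting the bumps to the right of $\lambda_{n_k}$ and taking $C$ to the left of $D$ is what guarantees the inequality $n>n_k$ on the convergence side.
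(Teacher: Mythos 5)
Your proof is correct and follows essentially the same route as the paper's: disjoint unit-length bumps anchored to a $1$-separated family of points of $\Lambda$, with heights reciprocal to the weights, yielding divergence on one unit interval and convergence on the adjacent interval to its left, where the tail condition \eqref{cn fast decr} absorbs any cluster of $\Lambda$-points hitting a single bump. The only cosmetic difference is that the paper normalizes each bump by the cluster sum $\sum_{j\in T_n}c_j$ of all weights whose $\lambda_j$ fall in $[y_n,y_n+\tfrac12]$, whereas you normalize by the single weight $c_{n_k}$ of one selected element; both work for the same reason.
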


\begin{proof} Let $\Lambda=\{\lambda_1,\lambda_2,\dots \}$ with $\lambda_1<\lambda_2< \dots $ and $\lambda_n \to \infty$.
Choose $y_n \nearrow \infty$ such that  $y_{n+1}-y_n > 1$ and $\Lambda \cap [y_n,y_n+\frac12] \neq \emptyset $ for all $n \in  {\mathbb {N}}$. For each $n \in  {\mathbb {N}}$ let
$$T_n =\Big \{j : \ \lambda_j\in \Big [y_n,y_n+\frac12 \Big ]\Big \},$$
and define
\begin{equation}\label{*dnf}
d_n = \frac1{\sum_{j\in T_n} c_j} \text{    and    } f=\sum_{n=1}^\infty d_n \mathbf{1}_{[y_n,y_n+1]}.
\end{equation}

\begin{claim}\label{*cll1}
$[0,\frac12]\subset D_{\pmb{c}}(f,\Lambda)$.
\end{claim}

\begin{proof} Let $x \in [0,\frac12]$.  Then for every $j \in T_n$ we have $x+\lambda_j\in [y_n,y_n+1]$. Hence
$f(x+\lambda_j)=d_n$.  Thus we obtain
$$\sum_{j=1}^\infty c_j f(x+\lambda_j) \ge \sum_{n=1}^\infty \sum_{j \in T_n} c_j f(x+\lambda_j)=\sum_{n=1}^\infty \frac1{d_n}d_n=\infty,$$ finishing the proof of Claim
\ref{*cll1}.
\end{proof}
\bigskip

\begin{claim}\label{*cll2}
$(-\infty,-\frac12) \subset C_{\pmb{c}}(f,\Lambda).$
\end{claim}

\begin{proof} Let $x \in (-\infty,-\frac12)$.  For each $n \in  {\mathbb {N}}$ define
$$R_n(x)=\{j: \ x+\lambda_j \in [y_n,y_n+1]\}.$$
  Note that if $j \in R_n(x)$, then $\lambda_j > y_n+\frac12$ and it follows that $j>i$ for all
$i \in T_n$.  Therefore, using (\ref{cn fast decr}) we obtain
$$\sum_{j\in R_n(x)} c_j < 2^{-n}\sum_{j\in T_n} c_j.$$  Therefore using \eqref{*dnf} we deduce
$$\sum_{j=1}^\infty c_j f(x+\lambda_j)=\sum_{n=1}^\infty \sum_{j \in R_n(x)} c_j f(x+\lambda_j) = \sum_{n=1}^\infty \Big (d_n \sum_{j\in R_n(x)} c_j \Big )$$
$$< \sum_{n=1}^\infty d_n 2^{-n}\sum_{j\in T_n} c_j=\sum_{n=1}^\infty 2^{-n}=1.$$
\end{proof}
Clearly the proof of Claim \ref{*cll2} also concludes the proof of Theorem \ref{*univt2seq}.
\end{proof}

\section{Acknowledgements}

Z. Buczolich thanks the R\'enyi Institute where he was
a visiting researcher for the academic year 2017-18.

B. Hanson would like to thank the Fulbright Commission and the R\'enyi Institute for their generous support during the Spring of 2018, while he was visiting Budapest as a Fulbright scholar.

%%%%%%%%%%%%%%%%%%%%%
%%%%%%%%%%%%%%%%%%%%%

\end{document}